\documentclass[a4paper]{amsart}
\usepackage{amsmath}
\usepackage{amsthm}
\usepackage{amsfonts}
\usepackage{amssymb}
\usepackage{cite}
\usepackage{enumerate}
\usepackage{mathrsfs}
\usepackage{eucal}

\newtheorem{theorem}{Theorem}[section]

\newtheorem{lemma}[theorem]{Lemma}
\newtheorem{proposition}[theorem]{Proposition}

\theoremstyle{definition}

\newtheorem{question}[theorem]{Question}
\newtheorem{remark}[theorem]{Remark}

\newcommand{\fu}{finitely universal} \newcommand{\Fu}{Finitely universal}
 \newcommand{\Lu}{Lattice universal}
\newcommand{\Ocom}{Overcommutative} \newcommand{\ocom}{overcommutative}

\newcommand{\Id}{\mathsf{Id}}

\newcommand{\ida}{\normalfont\texttt{A}}
\newcommand{\idb}{\normalfont\texttt{B}}
\newcommand{\idc}{\normalfont\texttt{C}}
\newcommand{\idd}{\normalfont\texttt{D}}
\newcommand{\ide}{\normalfont\texttt{E}}
\newcommand{\idO}{0}

\newcommand{\lattice}{\mathfrak}
\newcommand{\lL}{\lattice{L}}
\newcommand{\lEq}{\lattice{Eq}}

\newcommand{\monoid}{ }
\newcommand{\mA}{\monoid{A}}
\newcommand{\mB}{\monoid{B}}
\newcommand{\mM}{\monoid{M}}
\newcommand{\mN}{\monoid{N}}
\newcommand{\mS}{\monoid{S}}
\newcommand{\mT}{\monoid{T}}

\newcommand{\word}{\mathbf}
\newcommand{\wa}{\word{a}}
\newcommand{\wb}{\word{b}}
\newcommand{\ws}{\word{s}}
\newcommand{\wt}{\word{t}}
\newcommand{\wu}{\word{u}}
\newcommand{\wv}{\word{v}}
\newcommand{\ww}{\word{w}}

\newcommand{\setofwords}{\mathscr}
\newcommand{\sB}{\setofwords{B}}
\newcommand{\sD}{\setofwords{D}}
\newcommand{\sW}{\setofwords{W}}
\newcommand{\sX}{\setofwords{X}}

\newcommand{\svariety}{\mathbf}
\newcommand{\svCom}{\svariety{C}\text{\scriptsize$\svariety{OM}$}}
\newcommand{\svH}{\svariety{H}}
\newcommand{\svV}{\svariety{V}}

\newcommand{\variety}{\mathbb}
\newcommand{\vA}{\variety{A}}
\newcommand{\vB}{\variety{B}}
\newcommand{\vC}{\variety{C}}
\newcommand{\vCom}{\variety{C}\text{\scriptsize$\variety{OM}$}}
\newcommand{\vE}{\variety{E}}
\newcommand{\vJ}{\variety{J}}
\newcommand{\vM}{\variety{M}}
\newcommand{\vMon}{\variety{M}\text{\scriptsize$\variety{ON}$}}
\newcommand{\vO}{\variety{O}}

\newcommand{\vT}{\variety{T}}
\newcommand{\vV}{\variety{V}}

\newcommand{\Tuma}{T$\mathring{\text{u}}$ma}

\allowdisplaybreaks

\begin{document}
\title[Varieties of monoids with complex lattices of subvarieties]{Varieties of monoids with complex lattices of subvarieties}

\begin{abstract}
A variety is \textit{finitely universal} if its lattice of subvarieties contains an isomorphic copy of every finite lattice.
Examples of finitely universal varieties of semigroups have been available since the early 1970s, but it is unknown if there exists a finitely universal variety of monoids.
The main objective of the present article is to exhibit the first examples of finitely universal varieties of monoids.
The finite universality of these varieties is established by showing that the lattice of equivalence relations on every sufficiently large finite set is anti-isomorphic to some subinterval of the lattice of subvarieties.
\end{abstract}

\author[S.\@ V.\@ Gusev]{Sergey V.\@ Gusev}
\address{Institute of Natural Sciences and Mathematics, Ural Federal University, Lenina str.\@~51, 620000 Ekaterinburg, Russia}
\email{sergey.gusb@gmail.com}

\author[E.\@ W.\@ H.\@ Lee]{Edmond W.\@ H.\@ Lee}
\address{Department of Mathematics, Nova Southeastern University, Fort Lauderdale, FL 33314, USA}
\email{edmond.lee@nova.edu}

\subjclass[2000]{20M07, 08B15}
\keywords{Monoid, variety, lattice of subvarieties, finitely universal}
\thanks{The first author was supported by the Ministry of Science and Higher Education of the Russian Federation (project FEUZ-2020-0016).}

\maketitle

\section{Introduction}

\subsection{{\Fu} varieties} \label{subsec: Fu}

A \textit{variety} is a class of algebras of a fixed type that is closed under the formation of homomorphic images, subalgebras, and arbitrary direct products.
Following Shevrin et al.\@~\cite{SVV09}, a variety~$\svV$ is \textit{\fu} if its lattice $\lL(\svV)$ of subvarieties contains an isomorphic copy of every finite lattice.
Examples of {\fu} varieties include the variety $\svCom$ of all commutative semigroups and the variety~$\svH$ of semigroups defined by the identity \[ x^2 \approx yxy; \] see Burris and Nelson~\cite{BN71b} and Volkov~\cite{Vol89}, respectively.
Not only is the variety $\svCom$ non-finitely generated, it is not contained in any finitely generated variety.
The variety~$\svH$ is also non-finitely generated~\cite{Lee10}, but it is a more interesting example because it is contained in every known example of {\fu} variety generated by a finite semigroup, such as the well-known Brandt semigroup \[ \mB_2 = \langle a,b \,|\, a^2=b^2=0, \, aba=a, \, bab=b \rangle \] of order five and even several semigroups of order four~\cite{Lee07}.

Recall that a \textit{monoid} is a semigroup with an identity element.
Every group is a monoid, but a semigroup need not be a monoid in general.
For a semigroup~$\mS$ that is not a monoid, an external identity element~$1$ can be adjoined to it to obtain a monoid, which is denoted by~$\mS^1$.
Given a monoid~$\mM$ with identity element~$1$, it is possible to consider~$\mM$ as either an algebra with just an associative binary operation (semigroup signature) or an algebra with both an associative binary operation and a nullary operation fixing~$1$ (monoid signature).
Some results remain unchanged regardless of which signature type is considered, for example, a finite monoid is finitely based as a semigroup if and only if it is finitely based as a monoid \cite[Section~1]{Vol01}.
However, results can be drastically different at the varietal level, as demonstrated by the monoid~$\mN_6^1$ obtained from the nilpotent semigroup \[ \mN_6 = \langle a,b \,|\, a^2=b^2=bab=0\rangle \] of order six: the variety of semigroups generated by~$\mN_6^1$ has uncountably many subvarieties~\cite{Jac00}, while the variety of monoids generated by~$\mN_6^1$ has only five subvarieties~\cite{Jac05}.
The main reason for this huge ``loss" of varieties when switching from the semigroup signature to the monoid signature is due to the possibility in the latter to remove any variables from identities by substituting~$1$ into them.
The aforementioned identity $x^2 \approx yxy$, which defines the {\fu} variety~$\svH$ of semigroups, clearly illustrates this phenomenon: any variety of monoids that satisfies this identity also satisfies the identities $\{ x^2 \approx x,\, 1 \approx y^2 \}$ and so is trivial.

Nevertheless, there exist finite monoids that generate varieties of monoids with uncountably many subvarieties, for example, the monoid~$\mB_2^1$ of order six and the monoid~$\mN_8^1$ obtained from the nilpotent semigroup \[ \mN_8 = \langle a,b \,|\, a^2=b^2=baba=0\rangle \] of order eight~\cite{JL18}.
However, it is unknown if there exists a {\fu} variety of monoids.
This led to the following question posed in a recent study of varieties of monoids with extreme properties.

\begin{question}[Jackson and Lee~{\cite[Question~6.3]{JL18}}] \label{Q: Mon FU}
Is the variety~$\vMon$ of all monoids {\fu}?
\end{question}

A variety is \textit{periodic} if it satisfies the identity $x^{m+k} \approx x^m$ for some $m,k \geq 1$; a variety is \textit{aperiodic} if it satisfies such an identity with $k = 1$.
It is well known that a variety that is not periodic contains the variety~$\vCom$ of all commutative monoids and so is said to be \textit{\ocom}.
{\Ocom} varieties constitute the interval $[\vCom,\vMon]$.
Recently, Gusev~\cite{Gus18} proved that the interval $[\vCom,\vMon]$ violates every nontrivial lattice identity; the complexness of this interval naturally led to the following question.

\begin{question}[Gusev~{\cite[Question]{Gus18}}] \label{Q: OC FU}
Does the interval $[\vCom,\vMon]$ contain an isomorphic copy of every finite lattice?
\end{question}

An affirmative answer to Question~\ref{Q: OC FU} clearly implies an affirmative answer to Question~\ref{Q: Mon FU}.

\subsection{Main results and organization} \label{subsec: main results}

Unless otherwise specified, all varieties in this article are varieties of monoids.
The variety~$\vO$ defined by the following identities is central to the present investigation: \[ xyt_1xt_2y \approx yxt_1xt_2y, \quad xt_1xyt_2y \approx xt_1yxt_2y, \quad xt_1yt_2xy \approx xt_1yt_2yx. \tag{$\idO$} \label{id: O} \]
It is clear that every commutative monoid satisfies the identities~\eqref{id: O}, so that~$\vO$ is {\ocom}.
The identities~\eqref{id: O} played a prominent role in the construction of many important varieties \cite{Gus19,Jac05,Lee13a,Lee14,Lee15} and are crucial to the study of the finite basis problem \cite{Lee12a,Sap16,Sap19}.
Subvarieties of~$\vO$ are defined by identities of a very specific form (see Lemma~\ref{L: O subvarieties}), and this is favorable to the search for appropriate sublattices of $\lL(\vO)$ to address Questions~\ref{Q: Mon FU} and~\ref{Q: OC FU}.

Some background results are first given in Section~\ref{sec: prelim}.
It is then shown in Section~\ref{sec: overcom} that for each $n \geq 3$, the lattice of equivalence relations on an $n$-element set is anti-isomorphic to some subinterval of $[\vCom,\vO]$.
It follows from the well-known theorem of Pudl\'ak and \Tuma~\cite{PT80} that every finite lattice is embeddable in $[\vCom,\vO]$.
This affirmatively answers Question~\ref{Q: OC FU} and so also Question~\ref{Q: Mon FU}.

\begin{remark}
Since the class of all finite lattices violates every nontrivial quasi-iden\-ti\-ty~\cite{BG75}, the interval $[\vCom,\vO]$ also violates every nontrivial quasi-identity.
This generalizes the aforementioned result of Gusev~\cite{Gus18} about the interval $[\vCom,\vMon]$.
More generally, for any {\fu} variety~$\vV$, the lattice $\lL(\vV)$ violates every nontrivial quasi-identity.
\end{remark}

Given that {\ocom} varieties constitute the top region $[\vCom,\vMon]$ of the lattice $\lL(\vMon)$, an obvious next step in the investigation is to consider if there exists a periodic variety that is {\fu}.
To this end, the subvariety~$\vE_m$ of~$\vO$ defined by the identities $\{ x^{m+1} \approx x^m, \, x^mt \approx tx^m \}$ plays a useful role.
Evidently, the variety~$\vE_m$ is aperiodic and the proper inclusions $\vE_1 \subset \vE_2 \subset \vE_3 \subset \cdots$ hold.
Since all subvarieties of~$\vO$ are finitely based~\cite{Lee12a}, the variety~$\vE_m$ contains at most countably many subvarieties.

The variety~$\vE_1$ coincides with the variety of semilattice monoids: it is finitely generated and has only two subvarieties.
For each $m \geq 2$, the variety~$\vE_m$ is non-finitely generated \cite{Lee15}.
The lattice $\lL(\vE_2)$ is a countably infinite chain~\cite{Lee15}, but not much is known about the structure of the lattice $\lL(\vE_m)$ when $m \geq 3$.
In Section~\ref{sec: periodic}, the variety~$\vE_3$ is shown to be {\fu}; this result is obtained by showing that for each $n \geq 2$, the lattice of equivalence relations on an $n$-element set is anti-isomorphic to some subinterval of $\lL(\vE_3)$.
Consequently, the aperiodic variety~$\vE_m$ is {\fu} for all $m \geq 3$.
\begin{table}[h] \caption{Some properties satisfied by $\vE_m$}
\begin{tabular}{rccc} \hline
                                  & $m = 1$ & $m=2$ & $m\geq3$ \\ \hline
Number of subvarieties & $2$ & $\aleph_0$ & $\aleph_0$ \\
Finitely generated     & Yes & No         & No \\
{\Fu}                  & No  & No         & Yes \\ \hline
\end{tabular}
\end{table}

As noted earlier, there exist {\fu} varieties of semigroups that are finitely generated.
In contrast, by the celebrated theorem of Oates and Powell~\cite{OP64}, every finitely generated variety of groups contains only finitely many subvarieties and so is not {\fu}.
Given that the class of finite monoids is properly sandwiched between the class of finite semigroups and the class of finite groups, it is of fundamental interest to question the existence of a {\fu} variety of monoids that is finitely generated.
Although the {\fu} varieties $\vE_3,\vE_4,\vE_5,\ldots$ are non-finitely generated~\cite{Lee15}, they turn out to be relevant.
In Section~\ref{sec: FG}, each variety~$\vE_m$ is shown to be contained in some finitely generated variety.
Consequently, there exist {\fu} varieties of monoids that are finitely generated.

The article ends with some open problems in Section~\ref{sec: problems}.

\section{Preliminaries} \label{sec: prelim}

Acquaintance with rudiments of universal algebra is assumed of the reader.
Refer to the monograph of Burris and Sankappanavar~\cite{BS81} for more information.

\subsection{Words, identities, and deduction}

Let~$\sX^*$ denote the free monoid over a countably infinite alphabet~$\sX$.
Elements of~$\sX$ are called \textit{variables} and elements of~$\sX^*$ are called \textit{words}.

An identity is written as $\wu \approx \wv$, where $\wu,\wv \in \sX^*$; it is \textit{nontrivial} if $\wu \neq \wv$.
An identity $\wu \approx \wv$ is \textit{directly deducible} from an identity $\ws \approx \wt$ if there exist some words $\wa,\wb \in \sX^*$ and substitution $\varphi: \sX \to \sX^*$ such that $\{ \wu,\wv \} = \big\{ \wa\varphi(\ws)\wb,\wa\varphi(\wt)\wb \big\}$.
A nontrivial identity $\wu \approx \wv$ is \textit{deducible} from a set~$\Sigma$ of identities, indicated by $\Sigma \vdash \wu \approx \wv$, if there exists some finite sequence $\wu = \ww_0, \ww_1, \ldots, \ww_m = \wv$ of distinct words such that each identity $\ww_i \approx \ww_{i+1}$ is directly deducible from some identity in~$\Sigma$.
Informally, the deduction $\Sigma \vdash \wu \approx \wv$ means that the identities in~$\Sigma$ can be used to \textit{convert}~$\wu$ into~$\wv$.

\begin{theorem}[Birkhoff's Completeness Theorem for Equational Logic; see Burris and Sankappanavar {\cite[Theorem~II.14.19]{BS81}}] \label{T: deduction}
Let~$\vV$ be the variety defined by some set~$\Sigma$ of identities.
Then~$\vV$ satisfies an identity $\wu \approx \wv$ if and only if $\Sigma \vdash \wu \approx \wv$.
\end{theorem}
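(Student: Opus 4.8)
The plan is to establish the two implications separately by the classical argument. The forward direction expresses the \emph{soundness} of equational deduction and is routine; the converse expresses \emph{completeness} and is obtained by passing to the relatively free monoid in~$\vV$.

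For soundness, suppose $\Sigma \vdash \wu \approx \wv$, witnessed by a sequence $\wu = \ww_0, \ww_1, \ldots, \ww_m = \wv$ of distinct words in which each identity $\ww_i \approx \ww_{i+1}$ is directly deducible from some identity $\ws \approx \wt$ in~$\Sigma$. It suffices to verify that satisfaction of an identity is preserved under a single step of direct deduction: if a monoid~$\mM$ satisfies $\ws \approx \wt$, then for any words $\wa,\wb \in \sX^*$ and any substitution $\varphi: \sX \to \sX^*$, the monoid~$\mM$ also satisfies $\wa\varphi(\ws)\wb \approx \wa\varphi(\wt)\wb$; indeed, under any assignment of elements of~$\mM$ to the variables, the composed assignment makes $\varphi(\ws)$ and $\varphi(\wt)$ take equal values, and left and right multiplication by the values of~$\wa$ and~$\wb$ preserves this equality. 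Since~$\vV$ satisfies every identity in~$\Sigma$, it satisfies each $\ww_i \approx \ww_{i+1}$, and hence $\wu \approx \wv$ by transitivity.

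For completeness, suppose~$\vV$ satisfies $\wu \approx \wv$; one may assume $\wu \neq \wv$. Define a relation~$\theta$ on~$\sX^*$ by putting $(\word{p},\word{q}) \in \theta$ if and only if $\word{p} = \word{q}$ or $\Sigma \vdash \word{p} \approx \word{q}$. The crucial step is to check that~$\theta$ is a fully invariant congruence on the free monoid~$\sX^*$: reflexivity is immediate; symmetry holds because a deduction sequence may be read backwards; transitivity holds because two deduction sequences sharing an endpoint may be concatenated and then pruned of repeated words to restore distinctness; compatibility with multiplication holds because prepending and appending fixed words to every term of a deduction sequence again yields a deduction sequence (this is exactly the role played by~$\wa$ and~$\wb$ in the definition of direct deducibility); and closure under substitutions is analogous, since applying a fixed substitution to every term of a deduction sequence yields a deduction sequence. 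Consequently $F := \sX^*/\theta$ is a monoid, and $F$ satisfies every identity $\ws \approx \wt$ of~$\Sigma$: for any assignment of elements of~$F$ to the variables, represented by a substitution $\psi: \sX \to \sX^*$, the two sides $\psi(\ws)$ and $\psi(\wt)$ are related by~$\theta$ because $\psi(\ws) \approx \psi(\wt)$ is directly deducible from $\ws \approx \wt$ (with empty~$\wa$ and~$\wb$). Therefore $F \in \vV$.

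Finally, since $F \in \vV$ and~$\vV$ satisfies $\wu \approx \wv$ by hypothesis, the monoid~$F$ satisfies $\wu \approx \wv$. Evaluating both sides under the assignment sending each variable~$x$ to its own class~$x\theta$ — for which the corresponding evaluation homomorphism $\sX^* \to F$ is the canonical projection, so that an arbitrary word~$\word{r}$ takes the value~$\word{r}\theta$ — yields $\wu\theta = \wv\theta$; that is, $(\wu,\wv) \in \theta$. As $\wu \neq \wv$, this is precisely the statement that $\Sigma \vdash \wu \approx \wv$, completing the proof. I expect the main obstacle to be the verification that~$\theta$ is a fully invariant congruence — in particular, reconciling transitivity with the distinctness requirement on deduction sequences and checking each closure property directly against the syntactic definition of direct deducibility; the remainder is bookkeeping around the quotient $\sX^*/\theta$ and its canonical homomorphism from~$\sX^*$.
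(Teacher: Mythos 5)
Your proof is correct. Note that the paper itself offers no proof of this statement: it is quoted as a known result with a citation to Burris and Sankappanavar, and your argument is essentially a faithful reconstruction of the standard proof given there --- soundness by checking that satisfaction is preserved under a single step of direct deduction, and completeness by forming the quotient of the free monoid $\sX^*$ by the deducibility relation $\theta$, verifying that $\theta$ is a fully invariant congruence, and observing that the quotient lies in $\vV$ so that $(\wu,\wv) \in \theta$. One small point worth making explicit: when you close $\theta$ under substitutions, applying a substitution $\psi$ to a deduction sequence can identify previously distinct words (e.g., if $\psi$ sends variables to the empty word), so the resulting sequence may violate the distinctness requirement; this is repaired by exactly the same pruning device you use for transitivity (and if the two endpoints themselves collapse, the pair lands in $\theta$ via the equality clause). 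By contrast, compatibility with left and right multiplication needs no such repair, since multiplication by a fixed word is injective on the free monoid. With that remark added, the argument is complete.
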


Two sets of identities~$\Sigma_1$ and~$\Sigma_2$ are \textit{equivalent}, indicated by $\Sigma_1 \sim \Sigma_2$, if the deductions $\Sigma_1 \vdash \Sigma_2$ and $\Sigma_2 \vdash \Sigma_1$ hold.
In other words, $\Sigma_1 \sim \Sigma_2$ if and only if~$\Sigma_1$ and~$\Sigma_2$ define the same variety.
The subvariety of a variety~$\vV$ defined by a set~$\Sigma$ of identities is denoted by~$\vV\Sigma$.

For any set~$\mathsf{A}$, the lattice of equivalence relations on~$\mathsf{A}$ is denoted by $\lEq(\mathsf{A})$ and the equality relation on~$\mathsf{A}$ is denoted by~$\varepsilon_\mathsf{A}$.
Given any set $\sW \subseteq \sX^*$ of words and any equivalence relation $\pi \in \lEq(\sW)$, define \[ \Id(\pi) = \{ \wu \approx \wv \,|\, (\wu,\wv) \in \pi \}. \]

\begin{lemma} \label{L: equivalence}
For any $\sW \subseteq \sX^*$ and $\Sigma \subseteq \{ \wu \approx \wv \,|\, \wu,\wv \in \sW \}$, there exists some $\pi \in \lEq(\sW)$ such that $\Sigma \sim \Id(\pi)$.
\end{lemma}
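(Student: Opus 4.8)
The plan is to let $\pi$ be the least equivalence relation on $\sW$ that contains every pair $(\wu,\wv)$ with $(\wu \approx \wv) \in \Sigma$; equivalently, $\pi$ is the reflexive--symmetric--transitive closure of $\Sigma$ formed inside $\sW \times \sW$. One direction is then immediate: each nontrivial identity of $\Sigma$ is literally a member of $\Id(\pi)$, so $\Id(\pi) \vdash \Sigma$. All the work is thus in the reverse deduction $\Sigma \vdash \Id(\pi)$.

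For that, I would fix a nontrivial identity $\wu \approx \wv$ in $\Id(\pi)$, so $(\wu,\wv) \in \pi$ and $\wu \neq \wv$, and unwind the definition of the generated equivalence relation: there is a finite sequence $\wu = \ww_0, \ww_1, \ldots, \ww_k = \wv$ of words in $\sW$ such that, for each $i$, at least one of $(\ww_i \approx \ww_{i+1})$ and $(\ww_{i+1} \approx \ww_i)$ belongs to $\Sigma$. Shortening the sequence whenever some word recurs, I may assume the $\ww_i$ are pairwise distinct; the endpoints and the step-wise property survive this. Each step $\ww_i \approx \ww_{i+1}$ is directly deducible from the corresponding identity $\ws \approx \wt$ of $\Sigma$ (with $\{\ws,\wt\}=\{\ww_i,\ww_{i+1}\}$) by taking $\wa$ and $\wb$ to be the empty word and $\varphi$ the identity substitution, since the unordered pair $\{\wa\varphi(\ws)\wb, \wa\varphi(\wt)\wb\}$ in the definition of direct deducibility makes the orientation of $\ws \approx \wt$ irrelevant. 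Hence $\ww_0, \ww_1, \ldots, \ww_k$ witnesses $\Sigma \vdash \wu \approx \wv$, and $\Sigma \sim \Id(\pi)$ follows.

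I do not anticipate a real obstacle; the argument is bookkeeping. The points needing slight care are: trivial identities, which lie in $\Id(\pi)$ but fall outside the notion of deduction and may be ignored; the requirement that intermediate words in a deduction be distinct, handled by deleting repetitions (and loops) from the chain witnessing $(\wu,\wv) \in \pi$; and the fact that, because direct deducibility is phrased with an unordered pair, both an identity of $\Sigma$ and its reversal serve as single-step deductions, so the symmetric closure in the construction of $\pi$ is harmless. Birkhoff's Completeness Theorem (Theorem~\ref{T: deduction}) could instead be used to move between deducibility and equality of varieties, but the direct argument is more economical.
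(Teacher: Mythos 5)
Your proposal is correct and takes essentially the same route as the paper: both construct $\pi$ as the reflexive--symmetric--transitive closure of the relation that $\Sigma$ induces on $\sW$ and observe that the resulting set of identities is interdeducible with $\Sigma$. The paper simply asserts this equivalence for its closure $\widehat{\Sigma}$, whereas you spell out the chain argument (including the removal of repeated words and the irrelevance of orientation in direct deducibility), which is the same bookkeeping made explicit.
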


\begin{proof}
Let~$\widehat{\Sigma}$ be the smallest set of identities containing~$\Sigma$ such that
\begin{enumerate}[(1)]
\item[$\bullet$] $\wu \approx \wu \in \widehat{\Sigma}$ for all $\wu \in \sW$;
\item[$\bullet$] if $\wu \approx \wv \in \widehat{\Sigma}$, then $\wv \approx \wu \in \widehat{\Sigma}$; and
\item[$\bullet$] if $\wu \approx \wv, \, \wv \approx \ww \in \widehat{\Sigma}$, then $\wu \approx \ww \in \widehat{\Sigma}$.
\end{enumerate}
Then $\Sigma \sim \widehat{\Sigma}$ and $\widehat{\Sigma} = \Id(\pi)$ with $\pi = \{ (\wu,\wv) \,|\, \wu \approx \wv \in \widehat{\Sigma} \} \in \lEq(\sW)$.
\end{proof}

\subsection{Rigid words and identities} \label{subsec: rigid}

Define a \textit{rigid word} to be the word \[ x^{e_0} \prod_{i=1}^r (t_ix^{e_i}) = x^{e_0} t_1 x^{e_1} t_2 x^{e_2} \cdots t_r x^{e_r}, \] where $r \geq 0$ and $e_0,e_1,\ldots,e_r \geq 0$.
This rigid word is
\begin{enumerate}[(1)]
\item[$\bullet$] \textit{$n$-limited} if $e_0 + e_1 + \cdots + e_r \leq n$;
\item[$\bullet$] \textit{cube-free} if $e_0,e_1,\ldots,e_r < 3$.
\end{enumerate}
A \textit{rigid identity} is an identity formed by a pair of rigid words: \[ x^{e_0} \prod_{i=1}^r (t_ix^{e_i}) \approx x^{f_0} \prod_{i=1}^r (t_ix^{f_i}); \] this identity is \textit{efficient} if $(e_0,f_0),(e_1,f_1), \ldots,(e_r,f_r) \neq (0,0)$.
If the value of~$r$ is small in the rigid word or rigid identity above, then it is less cumbersome to write distinct variables instead of $t_1,t_2,\ldots,t_r$; for instance, the rigid identity $x^3 t_1 x t_2 x^2 t_3 \approx xt_1t_2x^9t_3x^2$ can be written as $x^3 h x k x^2 t \approx xhkx^9tx^2$.

\begin{lemma}[Lee {\cite[Lemma~8 and Remark~11]{Lee13a}}] \label{L: O subvarieties}
Each noncommutative subvariety of~$\vO$ is defined by the identities~\eqref{id: O} together with finitely many efficient rigid identities.
\end{lemma}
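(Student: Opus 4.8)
The plan is to fix a noncommutative subvariety $\vV$ of $\vO$, write $\vV = \vO\Sigma$ for some set $\Sigma$ of identities, and replace $\Sigma$ in two stages --- first by a set of rigid identities, then by a finite set of efficient rigid identities. The initial task is to understand the word combinatorics of $\vO$. Write a word $\ww$ as $\ww_0 t_1 \ww_1 \cdots t_r \ww_r$, where $t_1, \dots, t_r$ are its linear variables (those occurring exactly once) listed in order of occurrence, and each block $\ww_i$ is a word in the multiple variables. Two facts should be established. First, each of the three identities in~\eqref{id: O} can be applied --- calling on the remaining occurrences of the two variables involved as anchors --- to interchange two adjacent letters within a single block; hence any two words with the same \emph{skeleton} (the sequence of linear variables, together with, for each multiple variable, the sequence of its multiplicities in the successive blocks) are deducible from~\eqref{id: O}. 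Second, if $\vO \models \ww \approx \ww'$ then $\ww$ and $\ww'$ have the same linear variables in the same order: variable multiplicities are preserved because $\vO$ contains every commutative monoid, and the order of two linear variables cannot be reversed because $\vO$ is noncommutative (no nontrivial identity deducible from~\eqref{id: O} has a side shorter than four letters). Together with the fact that a noncommutative, hence nontrivial, variety satisfies no nontrivial identity between words in which each variable occurs once, this shows that in every identity $\wu \approx \wv$ true in $\vV$ the variables linear in both sides occur once in each and in the same order. A variable whose status (linear, multiple, or absent) differs between $\wu$ and $\wv$ contributes, on substituting $1$ for all other variables, an identity $x^i \approx x^j$, which is an efficient rigid identity with $r = 0$; setting these aside, we reduce to the principal case in which $\wu$ and $\wv$ share their linear variables --- hence also their multiple variables --- in the same order.

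The heart of the argument is a reconstruction step in the principal case. For a multiple variable $z$, let $\varphi_z$ be the substitution fixing $z$ and every linear variable and erasing every other multiple variable; then $z$ is the only multiple variable of $\varphi_z(\wu)$ and of $\varphi_z(\wv)$, and $t_1, \dots, t_r$ occur once and in the same order in each, so $\varphi_z(\wu) \approx \varphi_z(\wv)$ is a rigid identity, holding in $\vV$ as a substitution instance of $\wu \approx \wv$. I claim that~\eqref{id: O} together with the finitely many identities $\varphi_z(\wu) \approx \varphi_z(\wv)$ deduces $\wu \approx \wv$. To prove the claim, list the multiple variables $z_1, \dots, z_s$ whose block-multiplicity sequences in $\wu$ and $\wv$ differ, and adjust them one at a time. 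Starting from $\wu$ and having reached a word agreeing with $\wv$ on the patterns of $z_1, \dots, z_j$ and with $\wu$ on the rest, use~\eqref{id: O} to slide every occurrence of $z_{j+1}$ to the front of its block, and then apply $\varphi_{z_{j+1}}(\wu) \approx \varphi_{z_{j+1}}(\wv)$ under the substitution $t_i \mapsto \mathbf{c}_{i-1} t_i$ --- where $\mathbf{c}_{i-1}$ is the $z_{j+1}$-free part of the $(i-1)$-th block of the current word --- with the last block's $z_{j+1}$-free part adjoined as a suffix; this instance rewrites the current word into the one obtained from it by replacing the multiplicities of $z_{j+1}$ with those of $\wv$, and changes nothing else. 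After $s$ such steps the word has the same skeleton as $\wv$ and is thus deducible from $\wv$ by~\eqref{id: O} alone. Hence $\vV = \vO\Sigma'$ with $\Sigma'$ a set of rigid identities. Finally, a rigid identity with a slot whose two $z$-exponents both vanish --- so that $t_i t_{i+1}$ stands at that position on both sides --- is equivalent, by substituting $t_{i+1} \mapsto 1$ in one direction and $t_i \mapsto t_i t_{i+1}$ (with $t_{i+1}$ fresh) in the other, to the shorter rigid identity obtained by contracting $t_i t_{i+1}$ to a single variable; iterating, we may assume every member of $\Sigma'$ is efficient.

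For the finiteness, recall that every subvariety of $\vO$ is finitely based~\cite{Lee12a}, so $\vV$ has a finite defining set $\Gamma$ of identities. Since $\Gamma$ is a consequence of~\eqref{id: O} together with $\Sigma'$, the compactness of equational logic furnishes a finite $\Sigma_0 \subseteq \Sigma'$ such that~\eqref{id: O} together with $\Sigma_0$ deduces $\Gamma$; as $\vV$ itself satisfies~\eqref{id: O} and $\Sigma_0$, we conclude $\vV = \vO\Sigma_0$, and $\Sigma_0$ is the required finite set of efficient rigid identities. (One may instead avoid quoting finite basedness: efficient rigid identities, ordered modulo~\eqref{id: O} by a Higman--Dickson comparison of their exponent data, form a well-quasi-ordered set, so $\Sigma'$ is already equivalent to a finite subset --- essentially the route by which the finite basedness of subvarieties of $\vO$ is itself proved.)

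The step I expect to be the main obstacle is the word combinatorics of the first paragraph, in particular the assertion that the three identities of~\eqref{id: O}, each involving only two multiple variables, really do realise every transposition inside a block, and --- in the reconstruction --- let one march a chosen multiple variable to the front of every block without disturbing the linear skeleton. This is a bounded but intricate case analysis, best organised by which of the three identities governs first occurrences, interior occurrences, and last occurrences respectively, and by exploiting spare occurrences of a variable as anchors to unlock an otherwise-blocked move.
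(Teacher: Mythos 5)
The paper does not actually prove this lemma---it is quoted verbatim from Lee \cite[Lemma~8 and Remark~11]{Lee13a}---so there is no internal proof to compare against; what follows assesses your reconstruction on its own terms. Your overall route (normal form modulo~\eqref{id: O} via within-block transpositions, projection onto each multiple variable to produce rigid identities, reassembly by substituting the $z$-free block contents into the linear variables, contraction to efficient form, and finiteness via \cite{Lee12a} or a well-quasi-order argument) is the standard and correct one for~$\vO$, and the two points you flag as delicate do close. For an adjacent pair of occurrences of distinct multiple variables, each letter has another occurrence either earlier or later, and the three identities of~\eqref{id: O}, read in either direction and with the roles of $x$ and $y$ interchanged as needed, cover exactly the four resulting cases (later/later, earlier/later, later/earlier, earlier/earlier); moreover any substitution instance of~\eqref{id: O} only permutes a block's content, since a variable occurring in the image of $x$ or $y$ is automatically multiple in the resulting word, so the linear skeleton and the block multisets are genuine invariants of deduction from~\eqref{id: O}. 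Your substitution $t_i \mapsto \mathbf{c}_{i-1}t_i$ is likewise a legitimate instance of direct deducibility.

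The one genuine gap is the sentence ``setting these aside, we reduce to the principal case.'' If a variable $z$ were linear in $\wu$ but occurred twice in \emph{different blocks} of~$\wv$, the set-aside identity $x \approx x^{2}$ would not bridge the two occurrence patterns: as just noted, \eqref{id: O} never moves a letter across a linear variable, so~\eqref{id: O} together with $x \approx x^{2}$ and your principal-case projections need not recover $\wu \approx \wv$, and you have not shown that it does. The repair is to observe that the case is vacuous rather than merely ``set aside'': for any $j \geq 2$ the variety $\vO\{x \approx x^{j}\}$ satisfies $xy \approx x^{j}y^{j} \approx y^{j}x^{j} \approx yx$ (the middle step is your transposition argument, all four letters being multiple and in one block), and $\vO\{x^{i} \approx 1\}$ is a variety of groups, which are abelian in~$\vO$ (evaluate $t_1, t_2$ at inverses in the first identity of~\eqref{id: O}). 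Hence a noncommutative $\vV \subseteq \vO$ can satisfy no identity in which some variable is absent from, or linear in, exactly one side, and the principal case is the only case. With that repair---and with the boundary slots $i = 0$ and $i = r$ handled in the efficiency reduction just as you handle the interior ones, by deleting or prepending a leading or trailing $t_i$---the argument is complete. Your caution about circularity in citing \cite{Lee12a} is well placed but unfounded (that finite basis theorem predates and does not rest on the present lemma), and your well-quasi-order alternative is indeed the mechanism behind it.
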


\subsection{Factor monoids}

For any word $\ww \in \sX^*$, the \textit{factor monoid of $\ww$}, denoted by $\mM(\ww)$, is the monoid that consists of all factors of~$\ww$ and a zero element~$0$, with multiplication~$\cdot$ given by \[ \wu \cdot \wv = \begin{cases} \wu\wv & \text{if $\wu\wv$ is a factor of~$\ww$}, \\ 0 & \text{otherwise}; \end{cases} \] the empty word, more conveniently written as~$1$, is the identity element of $\mM(\ww)$.
For example, the monoids~$\mN_6^1$ and~$\mN_8^1$ introduced in Subsection~\ref{subsec: Fu} are isomorphic to the monoids $\mM(xyx)$ and $\mM(xyxy)$, respectively.

A word~$\ww$ is an \textit{isoterm} for a variety~$\vV$ if~$\vV$ violates any nontrivial identity of the form $\ww \approx \wv$.
Equivalently, $\ww$ is an isoterm for~$\vV$ if and only if the identities satisfied by~$\vV$ cannot be used to convert~$\ww$ into a different word.

Given any word~$\ww$, let $\vM(\ww)$ denote the variety generated by the factor monoid $\mM(\ww)$.
One advantage in working with factor monoids is the relative ease of checking if a variety $\vM(\ww)$ is contained in some given variety.

\begin{lemma}[Jackson {\cite[Lemma~3.3]{Jac05}}] \label{L: isoterm}
For any variety~$\vV$ and any word~$\ww$, the inclusion $\vM(\ww) \subseteq \vV$ holds if and only if~$\ww$ is an isoterm for~$\vV$.
\end{lemma}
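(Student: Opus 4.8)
The plan is to prove both implications by unwinding the definition of the factor monoid $\mM(\ww)$ and using the universal-algebraic characterization of membership in a variety. Throughout, I will use Birkhoff's theorem (Theorem~\ref{T: deduction}) in the form: $\vM(\ww) \subseteq \vV$ if and only if every identity satisfied by~$\vV$ is also satisfied by $\mM(\ww)$, which in turn holds if and only if $\mM(\ww)$ satisfies every identity in a defining set for~$\vV$.

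First I would prove the contrapositive of the forward direction. Suppose $\ww$ is \emph{not} an isoterm for~$\vV$, so $\vV$ satisfies some nontrivial identity $\ww \approx \wv$ with $\wv \neq \ww$. I claim $\mM(\ww)$ violates this identity, which gives $\vM(\ww) \not\subseteq \vV$. To see the violation, evaluate both sides in $\mM(\ww)$ under the substitution sending each variable~$x$ occurring in~$\ww$ to itself (viewed as a length-one factor of~$\ww$) and every other variable to~$1$. Since the variables in $\wv$ form a subset of those in $\ww$ (as $\vV$ is a variety of monoids and we may delete absent variables — but in fact for an identity $\ww\approx\wv$ to be of interest we only need to track the variables of $\ww$, sending extras to $1$), the left side evaluates to the product of the letters of~$\ww$ in order, namely~$\ww$ itself, which is a nonzero element of $\mM(\ww)$. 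The right side evaluates to $\wv$ if $\wv$ is a factor of~$\ww$ and to~$0$ otherwise; either way, since $\wv \neq \ww$ and $\ww$ is not a proper factor of itself, this value differs from~$\ww$ in $\mM(\ww)$. Hence $\mM(\ww) \not\models \ww \approx \wv$, as desired. (A small case check is needed when some variable of $\wv$ does not occur in $\ww$: then under the substitution the right side becomes a shorter word or $0$, and one checks it still cannot equal $\ww$; alternatively invoke that in the monoid signature $\vV$ also satisfies the identity obtained by setting those variables to $1$, reducing to the previous case.)

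For the converse, suppose $\ww$ is an isoterm for~$\vV$; I must show $\mM(\ww) \in \vV$, i.e.\ $\mM(\ww)$ satisfies every identity $\wu_1 \approx \wu_2$ that holds in~$\vV$. Fix such an identity and an arbitrary substitution $\varphi$ of the variables into $\mM(\ww)$; I must show $\varphi(\wu_1) = \varphi(\wu_2)$ in $\mM(\ww)$. Each variable is sent by $\varphi$ either to~$0$, to~$1$, or to a nonzero factor of~$\ww$. If any variable occurring in $\wu_1$ is sent to~$0$ and that variable also occurs in $\wu_2$ (or vice versa), both sides collapse to~$0$ and we are done; the genuinely substantive case is when $\varphi$ sends variables only to~$1$ or to nonzero factors of~$\ww$. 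After deleting the variables sent to~$1$, the words $\varphi(\wu_1), \varphi(\wu_2)$ become formal products of factors of~$\ww$. The key point is that applying the identity $\wu_1 \approx \wu_2$ together with such a substitution is itself a deduction step: since $\vV \vdash \wu_1 \approx \wu_2$, the identity $\varphi(\wu_1) \approx \varphi(\wu_2)$ (now an identity in the new alphabet of letters of~$\ww$, or after a further renaming, in the variables of~$\ww$) is a consequence of the identities of~$\vV$. If $\varphi(\wu_1)$, read as a word, equals~$\ww$ itself, then because $\ww$ is an isoterm for~$\vV$, the only word~$\vV$ can convert~$\ww$ into is~$\ww$, forcing $\varphi(\wu_2) = \ww = \varphi(\wu_1)$ as elements of $\mM(\ww)$. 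If instead $\varphi(\wu_1)$ is a proper factor of $\ww$, one embeds this situation into the isoterm $\ww$ by multiplying on the left and right by the appropriate complementary factors of $\ww$ (padding $\varphi(\wu_1)$ out to all of $\ww$); the isoterm property of $\ww$ then pins down $\varphi(\wu_2)$ to be the matching padded word, whose "middle" is forced to equal $\varphi(\wu_1)$, so again $\varphi(\wu_1)=\varphi(\wu_2)$ in $\mM(\ww)$. Finally, if $\varphi(\wu_1)$ is not a factor of~$\ww$ at all, then $\varphi(\wu_1) = 0$ in $\mM(\ww)$; by the same padding/isoterm argument applied carefully one shows $\varphi(\wu_2)$ is not a factor either, so $\varphi(\wu_2) = 0$ too.

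The main obstacle, and the step deserving the most care, is the converse direction — specifically the bookkeeping in the last paragraph that converts "$\ww$ is an isoterm" into the statement "$\mM(\ww) \models \wu_1 \approx \wu_2$" for an \emph{arbitrary} substitution~$\varphi$. One must correctly handle (i) variables mapped to~$1$, which are simply erased; (ii) variables mapped to~$0$, which trivialize both sides \emph{provided} the identity is balanced in the relevant sense, and one should note that $\vV$ being a monoid variety means we may freely set variables to~$1$, so the case split is exhaustive; and (iii) the padding argument that reduces an identity between two arbitrary factors of~$\ww$ to the isoterm property of the single word~$\ww$. The rest is routine given Theorem~\ref{T: deduction} and the definition of $\mM(\ww)$.
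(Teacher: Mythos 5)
The paper does not prove this lemma: it is quoted from Jackson \cite[Lemma~3.3]{Jac05}, so there is no internal proof to compare yours against. Your argument is the standard one for this result (substitute the letters of $\ww$ for themselves in the forward direction; lift a substitution into $\mM(\ww)$ back to the free monoid and pad out to all of $\ww$ in the converse), and it is sound in outline. However, the two spots that you yourself flag as ``small case checks'' are not actually closed by the fixes you propose, and they are the only places where the proof can fail, so they deserve to be named precisely.

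First, in the forward direction: if $\wv$ contains variables not occurring in $\ww$, then erasing them (equivalently, substituting~$1$) can return exactly the word $\ww$, in which case both of your suggested remedies yield the trivial identity $\ww \approx \ww$ and witness no violation. The correct fix is to substitute for each such extra variable a word of length exceeding $|\ww|$: the right-hand side then evaluates to~$0$ in $\mM(\ww)$ while the left-hand side evaluates to $\ww \neq 0$. Second, in the converse direction, your case split is not exhaustive: a variable may be sent to~$0$ while occurring in only \emph{one} of $\wu_1,\wu_2$, say only in $\wu_2$, while $\varphi(\wu_1)$ is a nonzero factor of~$\ww$; then $\varphi(\wu_2) = 0 \neq \varphi(\wu_1)$ is a genuine threat, and ``freely setting variables to~$1$'' does not dispose of it, since $0 \neq 1$ in $\mM(\ww)$. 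The repair is the same padding device you already use elsewhere: extend the lift $\psi \colon \sX \to \sX^*$ by sending each $0$-valued variable to a word longer than $\ww$; writing $\ww = \wa\,\psi(\wu_1)\,\wb$, the variety $\vV$ then satisfies $\ww \approx \wa\,\psi(\wu_2)\,\wb$ with $|\wa\,\psi(\wu_2)\,\wb| > |\ww|$, contradicting the isoterm hypothesis. With these two repairs (the same length argument also covers the sub-case where all letters go to nonzero factors but their concatenation fails to be a factor of~$\ww$), your proof is complete.
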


\section{{\Ocom} varieties} \label{sec: overcom}

For any $n \geq 3$, define the rigid identities
\begin{gather*}
x^n \prod_{i=1}^n t_i \approx \prod_{i=1}^n (t_i x), \tag{$\ida_n$} \\
x^{n-1}t \approx x^{n-2}tx \approx x^{n-3}tx^2 \approx \cdots \approx xtx^{n-2} \approx tx^{n-1}. \tag{$\idb_n$}
\end{gather*}
Let~$\sB_n$ be the set of the~$n$ rigid words that form the identities in~$\idb_n$: \[ \sB_n = \{ x^{n-1}t, \ x^{n-2}tx, \ x^{n-3}tx^2, \, \ldots, \ xtx^{n-2}, \ tx^{n-1} \}. \]

Recall that~$\vO$ is the variety defined by the identities~\eqref{id: O}; let $\vA_n = \vO\{\ida_n\}$.

\begin{theorem} \label{T: O}
For each $n \geq 3$, the lattice $\lEq(\sB_n)$ is anti-isomorphic to the subinterval $[\vA_n\{\idb_n\},\vA_n]$ of $[\vCom,\vO]$.
Consequently, the {\ocom} variety~$\vO$ is {\fu}.
\end{theorem}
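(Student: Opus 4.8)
The plan is to prove that the map $\Phi\colon\lEq(\sB_n)\to[\vA_n\{\idb_n\},\vA_n]$ given by $\Phi(\pi)=\vA_n\Id(\pi)$ is a lattice anti-isomorphism, and then to deduce the finite universality of~$\vO$ from the Pudl\'ak--\Tuma\ theorem. The routine points come first. The inclusion $\Phi(\pi)\subseteq\vA_n$ is clear, and since transitivity converts the chain $\idb_n$ into the set of all identities between words of~$\sB_n$, we have $\Id(\pi)\subseteq\Id(\sB_n\times\sB_n)\sim\idb_n$, whence $\Phi(\pi)\supseteq\vA_n\{\idb_n\}$; thus $\Phi$ maps into the stated interval, which is contained in $[\vCom,\vO]$ because every commutative monoid satisfies \eqref{id: O}, $\ida_n$, and $\idb_n$. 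That $\Phi$ is order-reversing is immediate from $\pi_1\subseteq\pi_2\Rightarrow\Id(\pi_1)\subseteq\Id(\pi_2)$.

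The heart of the argument is the claim that for all $\wu,\wv\in\sB_n$ one has $\Phi(\pi)\models\wu\approx\wv$ if and only if $(\wu,\wv)\in\pi$; I would prove the nontrivial direction by a content-and-length analysis of deductions. By Theorem~\ref{T: deduction}, $\Phi(\pi)\models\wu\approx\wv$ produces a sequence of distinct words $\wu=\ww_0,\dots,\ww_m=\wv$ whose consecutive pairs are directly deducible from \eqref{id: O}, from $\ida_n$, or from an identity of $\Id(\pi)$. All of these identities relate words of the same content, and the words of content $\{x^{n-1},t\}$ — those using exactly $n-1$ letters $x$ and one letter $t$ — are precisely the words $x^i t x^{\,n-1-i}$ with $0\leq i\leq n-1$, i.e. the elements of~$\sB_n$; hence every $\ww_j$ lies in~$\sB_n$. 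Now in a word of~$\sB_n$ the only factors occurring at least twice are powers of~$x$ (there is a single~$t$), so in a purported application of one of the identities \eqref{id: O} the repeated variables must be mapped to powers of~$x$, forcing both sides of the substituted identity to coincide; likewise the $n$-fold variable of~$\ida_n$ would have to be mapped to the empty word, again forcing coincidence. Therefore no step $\ww_j\to\ww_{j+1}$ can come from \eqref{id: O} or~$\ida_n$, every step is an instance of some $w'\approx w''$ from $\Id(\pi)$, and comparing lengths (each $\ww_j$ and each word of~$\sB_n$ has length~$n$) forces that instance to be $\ww_j=w'$, $\ww_{j+1}=w''$ with $(\ww_j,\ww_{j+1})\in\pi$; transitivity of~$\pi$ yields $(\wu,\wv)\in\pi$. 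This claim makes $\Phi$ injective; it also gives $\Phi(\pi)\subseteq\Phi(\sigma)\Leftrightarrow\sigma\subseteq\pi$ (since $\Phi(\pi)\subseteq\Phi(\sigma)$ forces every identity between words of~$\sB_n$ holding in $\Phi(\sigma)$ to hold in $\Phi(\pi)$), so $\Phi$ will be a lattice anti-isomorphism as soon as it is shown to be surjective.

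For surjectivity, fix $\vV\in[\vA_n\{\idb_n\},\vA_n]$ and put $\pi=\{(\wu,\wv)\in\sB_n\times\sB_n:\vV\models\wu\approx\wv\}$; then $\vV\subseteq\vA_n\Id(\pi)=\Phi(\pi)$, and one must establish the reverse inclusion. Since $xy$ is an isoterm for $\vA_n\{\idb_n\}$ — each of \eqref{id: O}, $\ida_n$, $\idb_n$ acts trivially on a multiplicity-free word — the variety $\vA_n\{\idb_n\}$, hence every~$\vV$ in the interval, is noncommutative; so by Lemma~\ref{L: O subvarieties} we may write $\vV=\vA_n\{r_1,\dots,r_k\}$ with each~$r_j$ an efficient rigid identity which, after deleting redundant ones, fails in~$\vA_n$. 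It then remains to show that each such~$r_j$ is deducible from \eqref{id: O}, $\ida_n$, and the identities of $\Id(\pi)$, and this is the main obstacle: it requires determining precisely which efficient rigid identities hold in $\vA_n\{\idb_n\}$ but not in~$\vA_n$, and showing that every such identity is, modulo \eqref{id: O} and~$\ida_n$, equivalent to (a set of) identities between words of~$\sB_n$ — which, being consequences of~$r_j$, must then hold in~$\vV$ and so belong to $\Id(\pi)$. Carrying out this combinatorial analysis of rigid identities modulo \eqref{id: O} and~$\ida_n$ is where the bulk of the work lies; granting it, $\vA_n\Id(\pi)\subseteq\vV$, so $\Phi$ maps onto the interval and the anti-isomorphism is proved.

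Finally, finite universality: let $L$ be an arbitrary finite lattice. Its order dual $L^{\mathrm{op}}$ is again a finite lattice, so by the Pudl\'ak--\Tuma\ theorem~\cite{PT80} it embeds into the partition lattice of some finite set, which — after adjoining singleton blocks if necessary — can be taken to be~$\sB_n$ for some $n\geq3$. Then $L\cong(L^{\mathrm{op}})^{\mathrm{op}}$ embeds into $\lEq(\sB_n)^{\mathrm{op}}$, which by the anti-isomorphism just established is isomorphic to the subinterval $[\vA_n\{\idb_n\},\vA_n]$ of $\lL(\vO)$. Since~$L$ was arbitrary, $\lL(\vO)$ contains an isomorphic copy of every finite lattice, so~$\vO$ is {\fu}; indeed, as $[\vA_n\{\idb_n\},\vA_n]\subseteq[\vCom,\vO]$, already the interval $[\vCom,\vO]$ enjoys this property.
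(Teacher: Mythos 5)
Your framework is the right one and coincides with the paper's: the map $\Phi(\pi)=\vA_n\{\Id(\pi)\}$ is the anti-isomorphism used there, and your treatment of well-definedness, order-reversal, and the key claim that $\Phi(\pi)\models\wu\approx\wv$ with $\wu,\wv\in\sB_n$ forces $(\wu,\wv)\in\pi$ is sound. (For that claim you argue syntactically --- content preservation plus the observation that \eqref{id: O} and $\ida_n$ act trivially on words of $\sB_n$ --- where the paper instead routes through factor monoids: every $(n-1)$-limited rigid word is an isoterm for $\vA_n$ because $\mM(\ww)$ satisfies $\{\eqref{id: O},\ida_n\}$. Both work; yours is more self-contained, the paper's isoterm lemma is reused elsewhere.)

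The genuine gap is in surjectivity, and you name it yourself but then grant it: you must show that every efficient rigid identity $\wu\approx\wv$ holding in some $\vV\in[\vA_n\{\idb_n\},\vA_n]$ but violated by $\vA_n$ is equivalent, modulo $\{\eqref{id: O},\ida_n\}$, to a set of identities from $\idb_n$. This ``combinatorial analysis'' is not a routine verification; it is the technical core of the theorem and occupies most of the paper's proof. Concretely, one needs: (1) writing $\wu=x^{e_0}\prod_{i=1}^r(t_ix^{e_i})$ and $\wv=x^{f_0}\prod_{i=1}^r(t_ix^{f_i})$, the common degree $e=f$ must equal exactly $n-1$ --- the upper bound because degree $\geq n$ lets $\ida_n$ collapse both sides to $x^n\prod t_i$ (contradicting that $\vA_n$ violates the identity), and the lower bound because every $(n-2)$-limited rigid word is an isoterm for $\vA_n\{\idb_n\}$, which itself requires checking that the relevant factor monoids satisfy $\{\eqref{id: O},\ida_n,\idb_n\}$; (2) since only $\idb_n$ can move $\wu$, at most two of the $e_i$ (and of the $f_i$) are nonzero, whence $1\leq r\leq3$ by efficiency; (3) an explicit case analysis for $r=1,2,3$ decomposing each surviving identity into members of $\idb_n$ --- e.g.\ $x^phx^qk\approx x^{p'}hkx^{q'}$ is equivalent to $\{x^phx^q\approx x^{p'}hx^{q'},\ x^{n-1}k\approx x^{p'}kx^{q'}\}$, and the $r=3$ cases such as $x^phkt x^q\approx hx^{p'}kx^{q'}t$ require short deduction chains in both directions. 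Without steps (1)--(3) you have not shown that $\Phi$ hits every variety in the interval, so the anti-isomorphism --- and with it the theorem --- is not established.
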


The proof of Theorem~\ref{T: O} requires some intermediate results.

\begin{lemma} \label{L: isoterms for An}
Let $n \geq 3$.
\begin{enumerate}[\rm(i)]
\item Every $(n-1)$-limited rigid word is an isoterm for $\vA_n$.
In particular, every word in~$\sB_n$ is an isoterm for~$\vA_n$.
\item Every $(n-2)$-limited rigid word is an isoterm for $\vA_n\{\idb_n\}$.
\end{enumerate}
\end{lemma}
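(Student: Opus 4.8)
The plan is to deduce both parts from the factor-monoid test of Lemma~\ref{L: isoterm}: a word $\ww$ is an isoterm for a variety $\vV$ precisely when $\mM(\ww) \in \vV$. Since $\vA_n$ is defined by the identities~\eqref{id: O} together with~$\ida_n$, and $\vA_n\{\idb_n\}$ by these together with~$\idb_n$, part~(i) reduces to showing that $\mM(\ww)$ satisfies~\eqref{id: O} and~$\ida_n$ whenever $\ww$ is an $(n-1)$-limited rigid word, while part~(ii)---noting that every $(n-2)$-limited rigid word is $(n-1)$-limited---reduces to the further claim that $\mM(\ww)$ satisfies~$\idb_n$ whenever $\ww$ is $(n-2)$-limited. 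The ``in particular'' assertion of~(i) is then immediate, since each of the $n$ words in~$\sB_n$ has exactly $n-1$ occurrences of~$x$ and so is $(n-1)$-limited.

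Every verification rests on three elementary facts about a rigid word $\ww = x^{e_0}\prod_{i=1}^r(t_ix^{e_i})$: (a) each of $t_1,\dots,t_r$ occurs exactly once in~$\ww$, so any word in which some~$t_i$ occurs at least twice fails to be a factor of~$\ww$; (b) $\ww$ has exactly $e_0+\dots+e_r$ occurrences of~$x$, so any word with strictly more occurrences of~$x$ fails to be a factor of~$\ww$; and (c) any nonempty factor of~$\ww$ involving none of $t_1,\dots,t_r$ is a power of~$x$. I would first handle~\eqref{id: O} for an \emph{arbitrary} rigid word~$\ww$. In each of the three identities the two sides have the same content, with two distinguished variables occurring twice each and the remaining variables once. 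Given a substitution~$\varphi$ of these variables into $\mM(\ww)$: if~$\varphi$ sends a distinguished variable to~$0$, then both sides of the identity evaluate to~$0$; if it sends a distinguished variable to a nonempty factor involving some~$t_i$, then, since that variable occurs twice on each side, by~(a) both sides again evaluate to~$0$; in the remaining case, by~(c) both distinguished variables go to powers of~$x$ (including $x^0 = 1$), which commute, so the two sides become the same formal product and hence represent the same element of $\mM(\ww)$. Thus $\mM(\ww) \in \vO$ for every rigid~$\ww$, and this part of the argument needs no bound on~$\ww$.

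For~$\ida_n$ with~$\ww$ an $(n-1)$-limited rigid word (and $n \geq 3$): under a substitution, if $x \mapsto 1$ both sides of~$\ida_n$ collapse to~$t_1t_2\cdots t_n$; if $x \mapsto 0$ both are~$0$; if~$x$ goes to a nonempty factor involving some~$t_i$, then since~$x$ occurs $n \geq 2$ times on each side that~$t_i$ occurs at least twice in each value, so both are~$0$ by~(a); and if~$x$ goes to a nonempty power of~$x$, then each side acquires at least $n > n-1 \geq e_0+\dots+e_r$ occurrences of~$x$, so both are~$0$ by~(b). The check of~$\idb_n$ for an $(n-2)$-limited~$\ww$ proceeds identically: a typical member of~$\idb_n$ has the shape $x^{n-1-j}tx^j \approx x^{n-1-k}tx^k$, on each side of which~$x$ occurs $n-1 \geq 2$ times, and the four cases $x \mapsto 1$, $x \mapsto 0$, $x \mapsto$ (factor involving some~$t_i$), $x \mapsto$ (nonempty power of~$x$) are settled just as before, the last using $n-1 > n-2 \geq e_0+\dots+e_r$. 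This yields $\mM(\ww) \in \vA_n$ in case~(i) and $\mM(\ww) \in \vA_n\{\idb_n\}$ in case~(ii), and Lemma~\ref{L: isoterm} completes the proof of both.

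I do not anticipate a real obstacle. The one step requiring a little care is the ``powers of~$x$'' subcase of~\eqref{id: O}: one should verify that after substituting commuting powers of~$x$ for the two distinguished variables, the two sides of each of the three identities genuinely coincide \emph{as words}, not merely as words with the same content. The only other nuisance is notational: the symbols $x$ and $t_i$ do double duty---as variables of the identities~\eqref{id: O},~$\ida_n$,~$\idb_n$ and as variables of the rigid word~$\ww$---so the exposition should keep these two roles clearly apart.
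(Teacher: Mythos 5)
Your proof is correct and follows exactly the paper's route: reduce both parts to Lemma~\ref{L: isoterm} and verify that the relevant factor monoids satisfy $\{\eqref{id: O},\ida_n\}$ (resp.\ $\{\eqref{id: O},\ida_n,\idb_n\}$). The paper simply declares this verification ``routinely checked,'' whereas you carry it out explicitly; your case analysis is sound.
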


\begin{proof}
(i) It is routinely checked that the factor monoid $\mM(\ww)$ of any $(n-1)$-limited rigid word~$\ww$ satisfies the identities $\{ \eqref{id: O},\ida_n \}$, so that $\vM(\ww) \subseteq \vA_n$.
The result then follows from Lemma~\ref{L: isoterm}.

(ii) It is routinely checked that the factor monoid $\mM(\ww)$ of any $(n-2)$-limited rigid word~$\ww$ satisfies the identities $\{ \eqref{id: O},\ida_n,\idb_n \}$, so that $\mM(\ww) \subseteq \vA_n\{\idb_n\}$.
The result then follows from Lemma~\ref{L: isoterm}.
\end{proof}

\begin{lemma} \label{sec: id of An pi}
Let $\pi \in \lEq(\sB_n)$.
Suppose that the variety $\vA_n\{\Id(\pi)\}$ satisfies some identity $\wu \approx \wv$ with $\wu \in \sB_n$.
Then $\wv \in \sB_n$ and $(\wu,\wv) \in \pi$.
\end{lemma}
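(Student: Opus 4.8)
The plan is to analyze how the identities defining $\vA_n\{\Id(\pi)\}$ — namely $\eqref{id: O}$, $\ida_n$, and $\Id(\pi)$ — can act on a word $\wu \in \sB_n$. By Birkhoff's Completeness Theorem (Theorem~\ref{T: deduction}), the deduction $\{\eqref{id: O},\ida_n\} \cup \Id(\pi) \vdash \wu \approx \wv$ holds, so there is a sequence of words $\wu = \ww_0, \ww_1, \ldots, \ww_k = \wv$ in which each step $\ww_j \approx \ww_{j+1}$ is a direct deduction from one of these identities. First I would observe that every word in $\sB_n$ has content $\{x,t\}$, contains exactly one occurrence of $t$, and exactly $n-1$ occurrences of $x$; moreover each such word is an isoterm for $\vA_n$ by Lemma~\ref{L: isoterms for An}(i). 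The crux is therefore to understand what a single application of an identity from $\Id(\pi)$ can do to a word of $\sB_n$: since every identity in $\Id(\pi)$ is of the form $\ws \approx \wt$ with $\ws,\wt \in \sB_n$, and such words are isoterms for $\vA_n$, no proper factor substitution can occur without leaving the realm of words equivalent to $\wu$ modulo $\vA_n$.

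The key step is to show that if $\vA_n$ satisfies $\wu \approx \ww$ for some $\wu \in \sB_n$, then $\ww = \wu$; this is immediate from the isoterm property. Consequently, the only deductions that move us away from $\wu$ are direct applications of identities from $\Id(\pi)$, and such an application, applied to a word that $\vA_n$-equals some element of $\sB_n$, must substitute the whole word (the prefix $\wa$ and suffix $\wb$ being empty, up to the rigid normalization forced by $\eqref{id: O}$). I would make this precise by arguing that a word in $\sB_n$ cannot contain a proper nonempty factor that is $\vA_n$-equal to a whole element $\ws \in \sB_n$, because $\ws$ has content $\{x,t\}$ of size two and any factor of an element of $\sB_n$ with that content is already the whole word. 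Thus each step in the deduction sequence that actually changes the word replaces a word $\vA_n$-equal to some $\ws \in \sB_n$ by the corresponding $\wt$, with $(\ws,\wt) \in \pi$.

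From here the argument closes by induction on the length $k$ of the deduction sequence together with transitivity and symmetry of $\pi$. At each stage the current word $\ww_j$ is $\vA_n$-equal to a unique element $\ws_j \in \sB_n$ (unique because distinct elements of $\sB_n$ are non-$\vA_n$-equal isoterms), with $\ws_0 = \wu$; a $\Id(\pi)$-step sends $\ws_j$ to $\ws_{j+1}$ with $(\ws_j,\ws_{j+1}) \in \pi$, while an $\eqref{id: O}$-or-$\ida_n$-step leaves $\ws_j$ unchanged. At the end, $\wv = \ww_k$ is $\vA_n$-equal to $\ws_k \in \sB_n$; but $\wv$ itself — being produced from $\wu \in \sB_n$ — has content $\{x,t\}$, exactly one $t$, and $n-1$ occurrences of $x$ only if $\ws_k = \wv$, which forces $\wv \in \sB_n$; and then $(\wu,\wv) = (\ws_0,\ws_k) \in \pi$ by transitivity of $\pi$.

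The main obstacle I anticipate is the careful verification that a direct deduction from an identity $\ws \approx \wt$ with $\ws,\wt \in \sB_n$, applied inside an arbitrary word $\ww$ that merely happens to be $\vA_n$-equal to an element of $\sB_n$, cannot do anything except replace the entire word — in other words, ruling out ``hidden'' occurrences of $\ws$ as a proper factor after some $\eqref{id: O}$-rearrangement. Handling this cleanly will likely require first putting $\ww$ into a canonical rigid form using $\eqref{id: O}$ and $\ida_n$ (so that one is comparing rigid words directly), and then a combinatorial check that a rigid word $\vA_n$-equivalent to an element of $\sB_n$ has no proper factor of content $\{x,t\}$; the bookkeeping here, rather than any conceptual difficulty, is the real work.
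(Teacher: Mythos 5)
Your proposal is correct and follows essentially the same route as the paper: Birkhoff's theorem yields a deduction sequence, Lemma~\ref{L: isoterms for An}(i) forces each step to be a direct deduction from $\Id(\pi)$ that replaces the entire word by another element of $\sB_n$, and induction plus transitivity of $\pi$ finishes. The only difference is that you maintain the weaker invariant that each $\ww_j$ is $\vA_n$-equal to some element of $\sB_n$ (which creates the ``hidden factor'' bookkeeping you worry about at the end), whereas the paper maintains the stronger and simpler invariant $\ww_j \in \sB_n$ itself, which makes every $\{\eqref{id: O},\ida_n\}$-step impossible outright and renders that extra work unnecessary.
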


\begin{proof}
It suffices to assume that $\wu \neq \wv$.
Since the deduction $\{ \eqref{id: O}, \ida_n, \Id(\pi) \} \vdash \wu \approx \wv$ holds by Theorem~\ref{T: deduction}, there exists a finite sequence $\wu = \ww_0, \ww_1, \ldots, \ww_m = \wv$ of distinct words such that each identity $\ww_i \approx \ww_{i+1}$ is directly deducible from some identity in $\{ \eqref{id: O}, \ida_n, \Id(\pi) \}$.
Then by Lemma~\ref{L: isoterms for An}(i), the word $\ww_0 = \wu \in \sB_n$ is an isoterm for~$\vA_n$, so that $\{ \eqref{id: O}, \ida_n \} \nvdash \ww_0 \approx \ww_1$ by Theorem~\ref{T: deduction}.
It follows that $\ww_0 \approx \ww_1$ is directly deducible from some identity $\ws \approx \wt$ in~$\Id(\pi)$.
Since $\ww_0,\ws,\wt \in \sB_n$, it is easily seen that $\{ \ww_0,\ww_1 \} = \{ \ws,\wt \}$.
Hence $\ww_1 \in \sB_n$ and $(\ww_0,\ww_1) \in \pi$.

The above argument can be repeated so that by induction, $\ww_{i+1} \in \sB_n$ and $(\ww_i,\ww_{i+1}) \in \pi$ for all $i=0,1,\ldots,m-1$.
Therefore $\wv = \ww_m \in \sB_n$ and, since~$\pi$ is an equivalence relation, $(\wu,\wv) = (\ww_0,\ww_m) \in \pi$.
\end{proof}

For the rest of this section, the mapping $\Phi: \lEq(\sB_n) \to [\vA_n\{\idb_n\},\vA_n]$ given by \[\Phi(\pi) = \vA_n\{\Id(\pi)\} \] is shown to be an anti-isomorphism.
The proof of Theorem~\ref{T: O} is thus complete.

\subsection*{The mapping $\Phi$ is injective}

Suppose that $\Phi(\pi) = \Phi(\rho)$ for some $\pi,\rho \in\lEq(\sB_n)$, so that $\vA_n\{\Id(\pi)\} = \vA_n\{\Id(\rho)\}$.
If $(\wu,\wv) \in \rho$, then the variety $\vA_n\{\Id(\pi)\}$ satisfies the identity $\wu \approx \wv$, whence $(\wu,\wv) \in \pi$ by Lemma~\ref{sec: id of An pi}.
Therefore the inclusion $\rho \subseteq \pi$ holds; the reverse inclusion $\rho \supseteq \pi$ holds by a symmetrical argument, thus $\pi = \rho$.

\subsection*{The mapping $\Phi$ is surjective}

It suffices to show that for any variety $\vV \in [\vA_n\{\idb_n\},\vA_n]$, there exists some $\pi \in \lEq(\sB_n)$ such that $\Phi(\pi) = \vV$.
Since $\Phi(\varepsilon_{\sB_n}) = \vA_n \{ \Id(\varepsilon_{\sB_n}) \} = \vA_n$, suppose that $\vV \neq \vA_n$.
Then by Lemma~\ref{L: O subvarieties}, there exists a finite nontrivial set~$\Sigma$ of efficient rigid identities such that $\vV = \vA_n\Sigma$; since $\vV \neq \vA_n$, the identities in~$\Sigma$ can be chosen to be violated by~$\vA_n$.
It is shown below that any identity $\wu \approx \wv$ in~$\Sigma$ is equivalent to some set of identities from~$\idb_n$.
It follows that $\vV = \vA_n\Sigma'$ for some $\Sigma' \subseteq \idb_n$.
By Lemma~\ref{L: equivalence}, there exists some $\pi \in \lEq(\sB_n)$ such that $\vV = \vA_n\{\Id(\pi)\}$, so that $\Phi(\pi) = \vV$ as required.

Since $\wu \approx \wv$ is an efficient rigid identity that is violated by~$\vA_n$, \[ \wu = x^{e_0} \prod_{i=1}^r (t_ix^{e_i}) \quad \text{and} \quad \wv = x^{f_0} \prod_{i=1}^r (t_ix^{f_i}) \] for some $r \geq 0$ and $e_0,f_0, e_1,f_1,\ldots, e_r,f_r \geq 0$.
Let $e = \sum_{i=0}^r e_i$ and $f = \sum_{i=0}^r f_i$.
Then $e=f$ because $\wu \approx \wv$ is satisfied by the variety~$\vCom$.
If $e=f \geq n$, then the identity~$\ida_n$ can be used to convert both~$\wu$ and~$\wv$ into the same word $x^n \prod_{i=1}^r t_i$, whence the variety $\vA_n$ satisfies the identity $\wu \approx \wv$, a contradiction.
Therefore $e=f \leq n-1$.
But since the variety $\vA_n\{\idb_n\}$ satisfies the identity $\wu \approx \wv$, it follows from Lemma~\ref{L: isoterms for An}(ii) that the words~$\wu$ and~$\wv$ cannot be $(n-2)$-limited.
Consequently, $e=f=n-1$.

Now since the variety $\vA_n\{\idb_n\}$ satisfies the nontrivial identity $\wu \approx \wv$, some identity from $\{ \eqref{id: O}, \ida_n,\idb_n \}$ must be able to convert~$\wu$ into a different word.
By Lemma~\ref{L: isoterms for An}(i), the $(n-1)$-limited rigid word~$\wu$ is an isoterm for the variety $\vA_n$, so none of the identities $\{ \eqref{id: O}, \ida_n \}$ can convert~$\wu$ into a different word.
Therefore only some identities from~$\idb_n$ can be used to convert~$\wu$ into a different word; in this case, it is easily seen that at most two of the exponents $e_0,e_1,\ldots,e_r$ can be nonzero.
By a symmetrical argument, at most two of the exponents $f_0,f_1,\ldots,f_r$ can be nonzero.
The efficiency of $\wu \approx \wv$ implies that $r \leq 3$, while $\wu \neq \wv$ implies that $r \geq 1$.
Therefore the only possibilities are $r = 1,2,3$.
In the following, it is less cumbersome to write $h,k,t$ in place of $t_1,t_2,t_3$, respectively.

\medskip

\noindent{\sc Case~1:} $r=1$.
Up to symmetry, the identity $\wu \approx \wv$ is one of the following:
\begin{enumerate}[\quad(i)]
\item $x^{n-1} h \approx h x^{n-1}$, \item $x^p h x^q \approx x^{n-1} h$, \item $x^p h x^q \approx x^{p'} h x^{q'}$,
\end{enumerate}
where $p,q,p',q' \geq 1$ and $p+q = p'+q' = n-1$.
Then $\wu \approx \wv$ is clearly in~$\idb_n$.

\medskip

\noindent{\sc Case~2:} $r=2$. Up to symmetry, the identity $\wu \approx \wv$ is one of the following:
\begin{enumerate}[\quad(i)]
\item[(iv)] $x^p h x^q k \approx x^{p'} hk x^{q'}$, \item[(v)] $x^p h x^q k \approx h x^{p'} k x^{q'}$, \item[(vi)] $x^p h x^q k \approx hk x^{n-1}$,
\end{enumerate}
where $p,q,p',q' \geq 1$ and $p+q = p'+q' = n-1$.
\begin{enumerate}[ \ {2.}1:]
\item $\wu \approx \wv$ is (iv).
Then the following are deducible from (iv): \[ {\rm(a)}: x^p h x^q \approx x^{p'} h x^{q'}, \quad {\rm(b)}: x^{n-1} k \approx x^{p'} k x^{q'}; \] conversely, (iv) is deducible from $\{ \rm(a),(b) \}$ because \[ x^p h x^q k \stackrel{\rm(a)}{\approx} x^{p'} h x^{q'} k \stackrel{\rm(b)}{\approx} x^{n-1} hk \stackrel{\rm(b)}{\approx} x^{p'} hk x^{q'}. \]
Therefore $\rm(iv) \sim \{ (a),(b) \}$.
\item $\wu \approx \wv$ is (v).
Then the following are deducible from (v): \[ {\rm(a)}: x^p h x^q \approx h x^{n-1}, \quad {\rm(b)}: x^{n-1} k \approx x^{p'} k x^{q'}; \] conversely, (v) is deducible from $\{ \rm(a),(b) \}$ because \[ x^p h x^q k \stackrel{\rm(a)}{\approx} h x^{n-1} k \stackrel{\rm(b)}{\approx} h x^{p'} k x^{q'}. \]
Therefore $\rm(v) \sim \{ (a),(b) \}$.
\item $\wu \approx \wv$ is (vi).
Then the following are deducible from (vi): \[ {\rm(a)} : x^p h x^q \approx h x^{n-1}, \quad {\rm(b)} : x^{n-1} k \approx k x^{n-1}; \] conversely, (vi) is deducible from $\{ \rm(a),(b) \}$ because \[ x^p h x^q k \stackrel{\rm(a)}{\approx} h x^{n-1} k \stackrel{\rm(b)}{\approx} hk x^{n-1}. \]
Therefore $\rm(vi) \sim \{ (a),(b) \}$.
\end{enumerate}

\medskip

\noindent{\sc Case~3:} $r=3$. Up to symmetry, the identity $\wu \approx \wv$ is one of the following:
\begin{enumerate}[\quad(i)]
\item[(vii)] $x^p h x^q kt \approx hk x^{p'} t x^{q'}$, \item[(viii)] $x^p hk x^q t \approx h x^{p'} kt x^{q'}$, \item[(ix)] $x^p hkt x^q \approx h x^{p'} k x^{q'} t$,
\end{enumerate}
where $p,q,p',q' \geq 1$ and $p+q = p'+q' = n-1$.
\begin{enumerate}[ \ {3.}1:]
\item $\wu \approx \wv$ is (vii).
Then the following are deducible from (vii): \[ {\rm(a)}: x^p h x^q \approx h x^{n-1}, \quad {\rm(b)}: x^{n-1} k \approx k x^{n-1}, \quad {\rm(c)}: x^{n-1} t \approx x^{p'} t x^{q'}; \] conversely, (vii) is deducible from $\{ \rm(a),(b),(c) \}$ because \[ x^p h x^q kt \stackrel{\rm(a)}{\approx} h x^{n-1} kt \stackrel{\rm(b)}{\approx} hk x^{n-1} t \stackrel{\rm(c)}{\approx} hk x^{p'} t x^{q'}. \]
Therefore $\rm(vii) \sim \{ (a),(b),(c) \}$.
\item $\wu \approx \wv$ is (viii).
Then the following are deducible from (viii): \[ {\rm(a)}: x^p h x^q \approx h x^{n-1}, \quad {\rm(b)}: x^p k x^q \approx x^{p'} k x^{q'}, \quad {\rm(c)}: x^{n-1} t \approx x^{p'} t x^{q'}; \] conversely, (viii) is deducible from $\{ \rm(a),(b),(c) \}$ because \[ x^p hk x^q t \stackrel{\rm(a)}{\approx} hk x^{n-1} t \stackrel{\rm(a)}{\approx} h x^p k x^q t \stackrel{\rm(b)}{\approx} h x^{p'} k x^{q'} t \stackrel{\rm(c)}{\approx} h x^{n-1} kt \stackrel{\rm(c)}{\approx} h x^{p'} kt x^{q'}. \]
Therefore $\rm(viii) \sim \{ (a),(b),(c) \}$.
\item $\wu \approx \wv$ is (ix).
Then the following are deducible from (ix): \[ {\rm(a)}: x^p h x^q \approx h x^{n-1}, \quad {\rm(b)}: x^p k x^q \approx x^{p'} k x^{q'}, \quad {\rm(c)}: x^p t x^q \approx x^{n-1} t; \] conversely, (ix) is deducible from $\{ \rm(a),(b),(c) \}$ because \[ x^p hkt x^q \stackrel{\rm(c)}{\approx} x^{n-1} hkt \stackrel{\rm(c)}{\approx} x^p hk x^q t \stackrel{\rm(a)}{\approx} hk x^{n-1} t \stackrel{\rm(a)}{\approx} h x^p k x^q t \stackrel{\rm(b)}{\approx} h x^{p'} k x^{q'} t. \]
Therefore $\rm(ix) \sim \{ (a),(b),(c) \}$.
\end{enumerate}
In any case, $\wu \approx \wv$ is equivalent to some set of identities from~$\idb_n$.

\subsection*{The mapping $\Phi$ is an anti-isomorphism}

Let $\pi,\rho \in \lEq(\sB_n)$.
If $\pi \subseteq \rho$, then the inclusion $\vA_n\{\Id(\rho)\} \subseteq \vA_n\{\Id(\pi)\}$ holds, so that $\Phi(\rho) \subseteq \Phi(\pi)$.
Conversely, assume the inclusion $\Phi(\rho) \subseteq \Phi(\pi)$, so that $\vA_n\{\Id(\rho)\} \subseteq \vA_n\{\Id(\pi)\}$.
Then for any $(\wu,\wv) \in \pi$, the identity $\wu \approx \wv$ is satisfied by $\vA_n\{\Id(\rho)\}$, whence $(\wu,\wv) \in \rho$ by Lemma~\ref{sec: id of An pi}.
Therefore $\pi \subseteq \rho$.

\section{Aperiodic varieties} \label{sec: periodic}

For any $n \geq 2$, define the rigid identities
\begin{align*}
x^4 \approx x^3, \quad x^3t \approx tx^3, & \quad x^{2n} \prod_{i=1}^{2n} t_i \approx \prod_{i=1}^{2n} (t_i x), \tag{$\idc_n$} \\
xt_1x^2t_2x^2 \cdots t_{n-2} x^2 t_{n-1}x^2 & \approx x^2t_1xt_2x^2 \cdots t_{n-2} x^2 t_{n-1}x^2 \tag{$\idd_n$} \\
& \approx x^2t_1x^2t_2x \cdots t_{n-2} x^2 t_{n-1}x^2 \\
& \,\ \vdots \\
& \approx x^2t_1x^2t_2x^2 \cdots t_{n-2} x t_{n-1}x^2 \\
& \approx x^2t_1x^2t_2x^2 \cdots t_{n-2} x^2 t_{n-1}x.
\end{align*}
Let~$\sD_n$ be the set of the~$n$ rigid words that form the identities in~$\idd_n$:
\[
\sD_n = \left\{
\begin{array}{r}
xt_1x^2t_2x^2 \cdots t_{n-2} x^2 t_{n-1}x^2, \ x^2t_1xt_2x^2 \cdots t_{n-2} x^2 t_{n-1}x^2, \, \ldots \\[0.05in]
\ldots, \ x^2t_1x^2t_2x^2 \cdots t_{n-2} x^2 t_{n-1}x
\end{array}
\right\}.
\]

Recall that~$\vO$ is the variety defined by the identities~\eqref{id: O}; let $\vC_n = \vO\{\idc_n\}$.
For each $n \geq 2$, the variety $\vC_n$ contains only finitely many subvarieties \cite[Theorem~4]{Lee13a} and is a subvariety of~$\vE_3$.

\begin{theorem} \label{T: periodic}
For each $n \geq 2$, the lattice $\lEq(\sD_n)$ is anti-isomorphic to the subinterval $[\vC_n\{\idd_n\},\vC_n]$ of $\lL(\vC_n)$.
Consequently, the aperiodic variety~$\vE_3$ is {\fu}.
\end{theorem}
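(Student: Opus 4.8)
The plan is to run, essentially verbatim, the argument used for Theorem~\ref{T: O}, substituting $\vC_n$, $\sD_n$, $\idd_n$ for $\vA_n$, $\sB_n$, $\idb_n$ throughout. First I would define $\Phi\colon\lEq(\sD_n)\to[\vC_n\{\idd_n\},\vC_n]$ by $\Phi(\pi)=\vC_n\{\Id(\pi)\}$ and show it is an anti-isomorphism. That $\Phi$ does land in this interval is immediate: $\vC_n\{\Id(\pi)\}\subseteq\vC_n$ trivially, while every identity in $\Id(\pi)$ joins two words of $\sD_n$ and hence is deducible from $\idd_n$, so $\vC_n\{\idd_n\}\subseteq\vC_n\{\Id(\pi)\}$. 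For the ``consequently'' clause: $\abs{\sD_n}=n$, so $\lEq(\sD_n)$ is the equivalence-relation lattice of an $n$-element set, and since $\vC_n\subseteq\vE_3$ the interval $[\vC_n\{\idd_n\},\vC_n]$ is a subinterval of $\lL(\vE_3)$; the theorem of Pudl\'ak and \Tuma~\cite{PT80} then yields, exactly as the finite universality of $\vO$ is deduced from Theorem~\ref{T: O}, that every finite lattice embeds in $\lL(\vE_3)$, so the aperiodic variety $\vE_3$ is {\fu}.

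Two preparatory lemmas are required, each a direct analogue of a result already proved for $\vA_n$. The first mirrors Lemma~\ref{L: isoterms for An}: \emph{every cube-free $(2n-1)$-limited rigid word --- in particular every word of $\sD_n$ --- is an isoterm for $\vC_n$, and every cube-free $(2n-2)$-limited rigid word is an isoterm for $\vC_n\{\idd_n\}$}. As in the proof of Lemma~\ref{L: isoterms for An}, this reduces by Lemma~\ref{L: isoterm} to the routine verification that the factor monoid of such a word satisfies $\{\eqref{id: O},\idc_n\}$, respectively $\{\eqref{id: O},\idc_n,\idd_n\}$; intuitively, a cube-free word with fewer than $2n$ occurrences of $x$ can realize neither the patterns $x^{2n}t_1\cdots t_{2n}$ nor $t_1xt_2x\cdots t_{2n}x$ needed to apply $\idc_n$, nor any power $x^3$ or $x^4$ needed for $x^4\approx x^3$ and $x^3t\approx tx^3$, whereas the identities~\eqref{id: O} are vacuous on words in which only $x$ repeats, and for $\vC_n\{\idd_n\}$ one further uses that any substitution instance of a word of $\sD_n$ contains at least $2n-1$ occurrences of $x$. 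The second mirrors Lemma~\ref{sec: id of An pi}: \emph{if $\pi\in\lEq(\sD_n)$ and $\vC_n\{\Id(\pi)\}$ satisfies $\wu\approx\wv$ with $\wu\in\sD_n$, then $\wv\in\sD_n$ and $(\wu,\wv)\in\pi$}. Its proof is line-for-line that of Lemma~\ref{sec: id of An pi}: a deduction $\wu=\ww_0,\ww_1,\dots,\ww_m=\wv$ from $\{\eqref{id: O},\idc_n,\Id(\pi)\}$ cannot take its first step via $\{\eqref{id: O},\idc_n\}$ because $\wu$ is an isoterm for $\vC_n$, so that step --- and, inductively, every step --- is directly deducible from an identity of $\Id(\pi)$ between two words of $\sD_n$, keeping the chain inside $\sD_n$; transitivity of $\pi$ then finishes.

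Given the two lemmas, the injectivity of $\Phi$ and the fact that $\Phi$ reverses inclusions both transcribe word for word from the arguments following Lemma~\ref{sec: id of An pi} in Section~\ref{sec: overcom}. The heart of the matter is surjectivity. For $\vV\in[\vC_n\{\idd_n\},\vC_n]$ with $\vV\ne\vC_n$, Lemma~\ref{L: O subvarieties} (applicable since $\vC_n\subseteq\vO$) writes $\vV=\vC_n\Sigma$ with $\Sigma$ a finite set of efficient rigid identities that may be taken to be violated by $\vC_n$ --- and hence satisfied by $\vC_n\{\idd_n\}$, as $\vC_n\{\idd_n\}\subseteq\vV$. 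By Lemma~\ref{L: equivalence} it then suffices to prove that each $\wu\approx\wv$ in $\Sigma$ is equivalent over $\vC_n$ to a set of identities joining words of $\sD_n$; one then obtains $\pi\in\lEq(\sD_n)$ with $\vC_n\{\Id(\pi)\}=\vV=\Phi(\pi)$. The analysis of a single $\wu\approx\wv$ proceeds as in Cases~1--3 of Section~\ref{sec: overcom}: overcommutativity forces $\wu$ and $\wv$ to contain $x$ equally often; the identities $\{x^4\approx x^3,\,x^3t\approx tx^3,\,\idc_n\}$ allow a reduction over $\vC_n$ to the case in which $\wu$ and $\wv$ are cube-free, and then the isoterm lemma, applied to $\wu$ and $\wv$ as non-isoterms for $\vC_n\{\idd_n\}$, forces each of them to contain a substitution instance of some word of $\sD_n$, i.e.\ a stretch of the form $x^2t_ix^2t_{i+1}\cdots x^2$ with exactly one exponent dropped to $1$; finally $\wu\approx\wv$ is decomposed, stretch by stretch, into identities each obtained from one of the ``chain'' identities of $\idd_n$ by adjoining a common prefix and suffix.

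I expect this last decomposition to be the principal obstacle, and to demand a considerably longer case analysis than the three cases of Section~\ref{sec: overcom}. There the words of $\sB_n$ carry a single linear variable and the efficient rigid identities that occur have at most three linear variables, whereas the words of $\sD_n$ carry $n-1$ linear variables, so the identities in $\Sigma$ may involve many linear variables and exhibit several movable length-one $x$-blocks at once; one must therefore organize the case analysis according to the positions, in $\wu$ and in $\wv$, of the length-zero and length-one $x$-blocks, and verify that the identity factors through these positions into pieces of $\idd_n$. In addition, unlike the situation for $\vO$, here the three overcommutative identities~\eqref{id: O}, the central power $x^3$, and the packing identity $\idc_n$ all act nontrivially on rigid words, and their combined effect has to be tracked at every step of each deduction; it is this bookkeeping, rather than any single conceptual difficulty, that constitutes the bulk of the proof.
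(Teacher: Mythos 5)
Your overall strategy---the map $\pi\mapsto\vC_n\{\Id(\pi)\}$, the two preparatory lemmas (isoterms for $\vC_n$ and $\vC_n\{\idd_n\}$, and the trapping of deductions inside $\sD_n$), and the injectivity/order-reversal arguments---is exactly the paper's, down to the wording. The one place you diverge is the final step of surjectivity, and there you have both left the crucial step unexecuted and misjudged what it requires. You anticipate that each efficient rigid identity $\wu\approx\wv$ in $\Sigma$ must be ``decomposed, stretch by stretch, into identities obtained from the chain identities of $\idd_n$ by adjoining a common prefix and suffix,'' and you expect a case analysis considerably longer than Cases 1--3 of Section~\ref{sec: overcom}. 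No such decomposition is needed, and the periodic case is in fact \emph{shorter} than the overcommutative one. The reason is the observation you are missing: after establishing that $\wu$ and $\wv$ are cube-free with $x$-degree exactly $2n-1$, one shows that \emph{no exponent $e_j$ or $f_j$ can be zero} (if $e_j=0$ then $f_j\in\{1,2\}$ by efficiency, but the identities $\{\eqref{id: O},\idc_n,\idd_n\}$ can only convert $\wv$ into cube-free rigid words whose $j$-th block remains nonempty, so they cannot convert $\wu$ into $\wv$, contradicting $\vC_n\{\idd_n\}\models\wu\approx\wv$). Hence every exponent lies in $\{1,2\}$, and this, together with the total degree $2n-1$ and the requirement that some identity of $\idd_n$ actually applies to $\wu$, forces $\wu$ itself to be a word of $\sD_n$---no padding by prefixes or suffixes, no splitting into pieces. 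Then $\wv\in\sD_n$ by the trapping lemma, so each identity of $\Sigma$ is literally an identity between two words of $\sD_n$ and Lemma~\ref{L: equivalence} finishes the argument.

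The contrast with Section~\ref{sec: overcom} is worth internalizing: there the words of $\sB_n$ have a single linear variable, so an efficient rigid identity satisfied by $\vA_n\{\idb_n\}$ can genuinely have $r$ up to $3$ and must be broken into members of $\idb_n$ (Cases 1--3); here the rigidity of $\sD_n$ (all blocks nonempty, all exponents in $\{1,2\}$, fixed degree $2n-1$) pins the identity down completely. Your worry about ``several movable length-one $x$-blocks at once'' evaporates once the zero-exponent case is excluded, since $r+1$ blocks from $\{1,2\}$ summing to $2n-1$ with an applicable $\idd_n$-identity leaves only $r=n-1$ with a single exponent equal to $1$. As written, your proposal would send you hunting for a case analysis that does not exist while omitting the short argument that actually closes the proof.
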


The proof of Theorem~\ref{T: periodic} requires some intermediate results.

\begin{lemma} \label{L: isoterms for Cn}
Let $n \geq 2$.
\begin{enumerate}[\rm(i)]
\item Every $(2n-1)$-limited cube-free rigid word is an isoterm for $\vC_n$.
In particular, every word in~$\sD_n$ is an isoterm for~$\vC_n$.
\item Every $(2n-2)$-limited cube-free rigid word is an isoterm for $\vC_n\{\idd_n\}$.
\end{enumerate}
\end{lemma}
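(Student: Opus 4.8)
The plan is to argue exactly as in Lemma~\ref{L: isoterms for An}, through Lemma~\ref{L: isoterm}: since~$\ww$ is an isoterm for a variety~$\vV$ precisely when $\vM(\ww) \subseteq \vV$, it suffices for part~(i) to check that the factor monoid $\mM(\ww)$ of an arbitrary $(2n-1)$-limited cube-free rigid word~$\ww$ satisfies $\{\eqref{id: O}, \idc_n\}$, and for part~(ii) that the factor monoid of an arbitrary $(2n-2)$-limited cube-free rigid word additionally satisfies~$\idd_n$. The ``in particular'' clauses follow at once, as every word of~$\sD_n$ is a $(2n-1)$-limited cube-free rigid word.

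First I would record the combinatorial setup. Write $\ww = z^{e_0} t_1 z^{e_1} t_2 z^{e_2} \cdots t_r z^{e_r}$, using the letter~$z$ for the power variable so as not to clash with the variable~$x$ occurring in the identities to be verified. Every nonzero element of $\mM(\ww)$ is a factor of~$\ww$, hence involves only the variables $z, t_1, \ldots, t_r$, with each~$t_i$ occurring at most once and~$z$ occurring at most $N := e_0 + e_1 + \cdots + e_r$ times. This gives the single principle used throughout: under a substitution~$\varphi$, a product of elements of $\mM(\ww)$ evaluates to~$0$ as soon as the word it spells out involves a variable outside $\{z, t_1, \ldots, t_r\}$, or involves some~$t_i$ twice, or involves~$z$ more than~$N$ times.

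Next I would test each defining identity against an arbitrary~$\varphi$, case-splitting on the image of $\varphi(x)$ in $\mM(\ww)$ (either~$1$, or~$0$, or a nonempty power of~$z$, or a factor involving some~$t_i$). For~\eqref{id: O}, in each of the three identities the variable~$x$ --- and likewise~$y$ --- occurs in two blocks separated by a~$t_j$; so whenever $\varphi(x)$ or $\varphi(y)$ involves a variable other than~$z$ both sides evaluate to~$0$ by the counting principle, and whenever $\varphi(x)$ and $\varphi(y)$ are both powers of~$z$ the two sides become the identical word. Hence~\eqref{id: O} holds in $\mM(\ww)$ for \emph{every} rigid~$\ww$. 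For $x^4 \approx x^3$ and $x^3t \approx tx^3$, cube-freeness ($e_i \le 2$) forces $\varphi(x)^3 = 0$ unless $\varphi(x) = 1$, in which case both sides agree. For $x^{2n}\prod_{i=1}^{2n} t_i \approx \prod_{i=1}^{2n}(t_ix)$: if $\varphi(x) = 1$ both sides equal $\varphi(t_1)\varphi(t_2)\cdots\varphi(t_{2n})$, and otherwise the left side spells a word containing $\varphi(x)$ as a $2n$-th power while the right side spells a word containing~$2n$ disjoint copies of $\varphi(x)$, so the counting principle with $N \le 2n-1$ forces both sides to~$0$. This settles part~(i). For part~(ii) the same computation applies to any identity of~$\idd_n$, whose two sides are rigid words of total $x$-degree $2n-1$ with every $x$-block nonempty: if $\varphi(x) = 1$ both sides equal $\varphi(t_1)\varphi(t_2)\cdots\varphi(t_{n-1})$; if $\varphi(x)$ is a nonempty power of~$z$ both sides spell words with at least $2n-1$ occurrences of~$z$, which exceeds $N \le 2n-2$ because~$\ww$ is $(2n-2)$-limited, so both sides are~$0$; and if $\varphi(x)$ involves a variable other than~$z$, that variable lies in at least two separated blocks and is spelled at least twice on each side, again giving~$0$. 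The bound $N \le 2n-2$ is exactly what makes this work, which is why part~(ii) needs words one letter shorter than part~(i).

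Finally, Lemma~\ref{L: isoterm} converts the membership $\mM(\ww) \in \vC_n = \vO\{\idc_n\}$ established in~(i) into ``$\ww$ is an isoterm for $\vC_n$'' and the membership $\mM(\ww) \in \vC_n\{\idd_n\}$ established in~(ii) into ``$\ww$ is an isoterm for $\vC_n\{\idd_n\}$''. I do not expect a genuine obstacle here: this is a routine verification of identities in finite monoids, entirely parallel to Lemma~\ref{L: isoterms for An}. The one point that demands care is the occurrence-counting --- keeping the identity variable~$x$ mentally distinct from the repeated variable~$z$ of~$\ww$, and invoking the threshold at the right strength ($N \le 2n-1$ for~(i), $N \le 2n-2$ for~(ii)).
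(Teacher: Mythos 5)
Your proposal is correct and follows exactly the paper's route: verify that the factor monoid $\mM(\ww)$ satisfies $\{\eqref{id: O},\idc_n\}$ (respectively $\{\eqref{id: O},\idc_n,\idd_n\}$) and then invoke Lemma~\ref{L: isoterm}. The paper dismisses the verification as ``routinely checked,'' whereas you carry out the occurrence-counting explicitly; your counting (the thresholds $2n-1$ versus $2n-2$ and the cube-freeness bound) is accurate.
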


\begin{proof}
(i) It is routinely checked that the factor monoid $\mM(\ww)$ of any $(2n-1)$-limited cube-free rigid word~$\ww$ satisfies the identities $\{ \eqref{id: O},\idc_n \}$, so that $\vM(\ww) \subseteq \vC_n$.
The result then follows from Lemma~\ref{L: isoterm}.

(ii) It is routinely checked that the factor monoid $\mM(\ww)$ of any $(2n-2)$-limited cube-free rigid word~$\ww$ satisfies the identities $\{ \eqref{id: O},\idc_n,\idd_n \}$, so that $\vM(\ww) \subseteq \vC_n\{\idd_n\}$.
The result then follows from Lemma~\ref{L: isoterm}.
\end{proof}

\begin{lemma} \label{sec: id of Cn pi}
Let $\pi \in \lEq(\sD_n)$ and let $\wu \approx \wv$ be any identity satisfied by the variety $\vC_n\{\Id(\pi)\}$.
\begin{enumerate}[\rm(i)]
\item Suppose that~$\wu$ is a $(2n-1)$-limited cube-free rigid word.
Then~$\wv$ is also a $(2n-1)$-limited cube-free rigid word.
\item Suppose that $\wu \in \sD_n$.
Then $\wv \in \sD_n$ and $(\wu,\wv) \in \pi$.
\end{enumerate}
\end{lemma}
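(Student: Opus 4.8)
The plan is to imitate the proof of Lemma~\ref{sec: id of An pi}: invoke Birkhoff's Completeness Theorem (Theorem~\ref{T: deduction}) to turn the semantic hypothesis into an explicit deduction $\wu = \ww_0, \ww_1, \dots, \ww_m = \wv$ of distinct words, and then propagate structural information about the words along that deduction. The twist is that the ``stable'' class of words here is not the finite set $\sD_n$ but the larger class of $(2n-1)$-limited cube-free rigid words; this is why I would prove~(i) first and then use it as the engine of~(ii).

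For both parts one may assume $\wu \neq \wv$, the conclusions being immediate otherwise. Since $\vC_n\{\Id(\pi)\}$ is defined by $\{\eqref{id: O}, \idc_n, \Id(\pi)\}$, Theorem~\ref{T: deduction} supplies the sequence above, with each $\ww_i \approx \ww_{i+1}$ directly deducible from some identity of $\{\eqref{id: O}, \idc_n, \Id(\pi)\}$. For~(i) I would show by induction on $i$ that every $\ww_i$ is a $(2n-1)$-limited cube-free rigid word, the base case being the hypothesis on $\ww_0 = \wu$. In the inductive step, $\ww_i$ is then an isoterm for $\vC_n$ by Lemma~\ref{L: isoterms for Cn}(i), so---as $\ww_i \neq \ww_{i+1}$---Theorem~\ref{T: deduction} excludes $\{\eqref{id: O}, \idc_n\} \vdash \ww_i \approx \ww_{i+1}$; hence $\ww_i \approx \ww_{i+1}$ is directly deducible from some $\ws \approx \wt \in \Id(\pi)$ with $\ws, \wt \in \sD_n$, say $\ww_i = \wa\varphi(\ws)\wb$ and $\ww_{i+1} = \wa\varphi(\wt)\wb$ for some $\wa, \wb \in \sX^*$ and substitution $\varphi$. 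The key point---postponed to the last paragraph---is that this situation forces $\varphi(x) = x$, the context words $\wa$ and $\wb$ to contain no $x$, and every $\varphi(t_j)$ to be a nonempty $x$-free word. Granting this, $\ww_{i+1}$ has total $x$-exponent exactly $2n-1$, its maximal blocks of $x$ are the exponents of $\wt$ (each at most $2$, with no merging since every $\varphi(t_j)$ is nonempty), and only $x$ is repeated in it; so $\ww_{i+1}$ is again a $(2n-1)$-limited cube-free rigid word. This closes the induction, and in particular $\wv = \ww_m$ has the required form.

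For~(ii), part~(i) shows that every $\ww_i$ is a $(2n-1)$-limited cube-free rigid word and hence an isoterm for $\vC_n$; so, just as above, each step $\ww_i \approx \ww_{i+1}$ is realised by some $\ws_i \approx \wt_i \in \Id(\pi)$ with $\ws_i, \wt_i \in \sD_n$, a context $(\wa_i, \wb_i)$, and a substitution $\varphi_i$ with $\varphi_i(x) = x$, with $\wa_i$ and $\wb_i$ free of $x$, and with each $\varphi_i(t_j)$ a nonempty $x$-free word. I would then prove by induction that $\ww_i \in \sD_n$ and $(\ww_i, \ww_{i+1}) \in \pi$; the base case is $\ww_0 = \wu \in \sD_n$. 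In the step, $\ww_i \in \sD_n$ involves exactly $n-1$ variables other than $x$, whereas---writing, say, $\ww_i = \wa_i\varphi_i(\ws_i)\wb_i$---the $n-1$ nonempty $x$-free words $\varphi_i(t_1), \dots, \varphi_i(t_{n-1})$ already contribute at least $n-1$ such variables (all distinct, by rigidity of $\ww_i$); equality forces $\wa_i = \wb_i = 1$ and each $\varphi_i(t_j)$ to be a single variable, and comparing $\ww_i = x^{e_0}\varphi_i(t_1)x^{e_1}\cdots\varphi_i(t_{n-1})x^{e_{n-1}}$ with the $\sD_n$-word $\ww_i$ then shows that $\varphi_i$ restricts to the identity on $\{x, t_1, \dots, t_{n-1}\}$. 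Hence $\{\ww_i, \ww_{i+1}\} = \{\ws_i, \wt_i\}$, so $\ww_{i+1} \in \sD_n$ and $(\ww_i, \ww_{i+1}) \in \pi$. This closes the induction, so $\wv = \ww_m \in \sD_n$ and, by transitivity of~$\pi$, $(\wu, \wv) = (\ww_0, \ww_m) \in \pi$.

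The hard part is the structural claim used twice above: if $\ws, \wt \in \sD_n$ and $\wa\varphi(\ws)\wb$ is a $(2n-1)$-limited cube-free rigid word distinct from $\wa\varphi(\wt)\wb$, then $\varphi(x) = x$, the words $\wa$ and $\wb$ contain no $x$, and no $\varphi(t_j)$ is empty. The reason is as follows. Since $\wa\varphi(\ws)\wb$ is rigid and $x$ occurs at least $2n-1 \geq 3$ times in $\ws$ (including at least one adjacent pair, because $n \geq 2$ forces some exponent of $\ws$ to equal $2$), $\varphi(x)$ must be a power of a single variable, which is then the repeated variable of $\wa\varphi(\ws)\wb$; one may take it to be $x$. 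The value $\varphi(x) = 1$ is impossible, since it would give $\varphi(\ws) = \varphi(\wt)$ and hence $\wa\varphi(\ws)\wb = \wa\varphi(\wt)\wb$. So $\varphi(x) = x^c$ with $c \geq 1$, and counting occurrences of $x$ yields $c(2n-1) + (\text{occurrences of } x \text{ in } \wa\wb\varphi(t_1)\cdots\varphi(t_{n-1})) \leq 2n-1$; as $c \geq 1$ this forces $c = 1$ and forces $\wa$, $\wb$, $\varphi(t_1), \dots, \varphi(t_{n-1})$ all to be free of $x$. Finally, if some $\varphi(t_j)$ were empty, the two $x$-blocks of $\wa\varphi(\ws)\wb$ flanking it would coalesce into one block whose length is a sum of two exponents of $\ws$; cube-freeness would force both exponents to equal $1$, contradicting the fact that a word of $\sD_n$ has exactly one exponent equal to $1$. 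This is the analogue of the terse remark ``it is easily seen that $\{\ww_0, \ww_1\} = \{\ws, \wt\}$'' in the proof of Lemma~\ref{sec: id of An pi}, but genuinely more delicate, because the length bound $2n-1$, cube-freeness, and the ``exactly one unit exponent'' shape of $\sD_n$-words must all be brought to bear, and both the length bound and cube-freeness must be tracked along the whole deduction rather than inside a fixed finite set of words.
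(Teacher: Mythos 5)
Your proposal is correct and follows essentially the same route as the paper's proof: apply Theorem~\ref{T: deduction} to obtain a sequence of direct deductions, use Lemma~\ref{L: isoterms for Cn}(i) to force every step to come from $\Id(\pi)$, and propagate the $(2n-1)$-limited cube-free rigid form (and, for part~(ii), membership in $\sD_n$) along the sequence by induction. The only difference is that you spell out the substitution analysis that the paper dismisses as ``easily seen,'' and your elaboration is accurate.
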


\begin{proof}
It suffices to assume that $\wu \neq \wv$.
Since the deduction $\{ \eqref{id: O}, \idc_n, \Id(\pi) \} \vdash \wu \approx \wv$ holds by Theorem~\ref{T: deduction}, there exists a finite sequence $\wu = \ww_0, \ww_1, \ldots, \ww_m = \wv$ of distinct words such that each identity $\ww_i \approx \ww_{i+1}$ is directly deducible from some identity in $\{ \eqref{id: O}, \idc_n, \Id(\pi) \}$.

(i) Suppose that~$\wu$ is a $(2n-1)$-limited cube-free rigid word.
Then by Lem\-ma~\ref{L: isoterms for Cn}(i), the word $\ww_0 = \wu$ is an isoterm for~$\vC_n$.
Therefore the identity $\ww_0 \approx \ww_1$ can only be directly deducible from some identity in $\Id(\pi)$.
Since any identity in $\Id(\pi)$ is formed by a pair of words from~$\sD_n$, it is easily seen that~$\ww_1$ is a $(2n-1)$-limited cube-free rigid word.

The above argument can be repeated so that for all $i=0,1,\ldots,m-1$, the identity $\ww_i \approx \ww_{i+1}$ can only be directly deducible from some identity in $\Id(\pi)$, and~$\ww_{i+1}$ is a $(2n-1)$-limited cube-free rigid word; in particular, $\wv = \ww_m$ is such a word.

(ii) By part~(i), each identity $\ww_i \approx \ww_{i+1}$ can only be directly deducible from some identity in $\Id(\pi)$.
Therefore if $\ww_0 = \wu \in \sD_n$, then it is easy to see that $\ww_{i+1} \in \sD_n$ and $(\ww_i,\ww_{i+1}) \in \pi$
for all $i=0,1,\ldots,m-1$.
Hence $\wv = \ww_m \in \sD_n$ and, since~$\pi$ is an equivalence relation, $(\wu,\wv) = (\ww_0,\ww_m) \in \pi$.
\end{proof}

For the rest of this section, the mapping $\Lambda: \lEq(\sD_n) \to [\vC_n\{\idd_n\},\vC_n]$ given by \[ \Lambda(\pi) = \vC_n\{\Id(\pi)\} \] is shown to be an anti-isomorphism.
The proof of Theorem~\ref{T: periodic} is thus complete.

\subsection*{The mapping $\Lambda$ is injective}

Suppose that $\Lambda(\pi) = \Lambda(\rho)$ for some $\pi,\rho \in\lEq(\sD_n)$, so that $\vC_n\{\Id(\pi)\} = \vC_n\{\Id(\rho)\}$.
If $(\wu,\wv) \in \rho$, then the variety $\vC_n\{\Id(\pi)\}$ satisfies the identity $\wu \approx \wv$, whence $(\wu,\wv) \in \pi$ by Lemma~\ref{sec: id of Cn pi}.
Therefore the inclusion $\rho \subseteq \pi$ holds; the reverse inclusion $\rho \supseteq \pi$ holds by a symmetrical argument, thus $\pi = \rho$.

\subsection*{The mapping $\Lambda$ is surjective}

It suffices to show that for any variety $\vV \in [\vC_n\{\idd_n\},\vC_n]$, there exists some $\pi \in \lEq(\sD_n)$ such that $\Lambda(\pi) = \vV$.
Since $\Lambda(\varepsilon_{\sD_n}) = \vC_n \{ \Id(\varepsilon_{\sD_n}) \} = \vC_n$, suppose that $\vV \neq \vC_n$.
Then by Lemma~\ref{L: O subvarieties}, there exists a finite nontrivial set~$\Sigma$ of efficient rigid identities such that $\vV = \vC_n\Sigma$; since $\vV \neq \vC_n$, the identities in~$\Sigma$ can be chosen to be violated by~$\vC_n$.
It is shown below that any identity $\wu \approx \wv$ in~$\Sigma$ is from~$\idd_n$.
By Lemma~\ref{L: equivalence}, there exists some $\pi \in \lEq(\sD_n)$ such that $\vV = \vC_n\{\Id(\pi)\}$, so that $\Lambda(\pi) = \vV$ as required.

Since $\wu \approx \wv$ is an efficient rigid identity that is violated by~$\vC_n$, \[ \wu = x^{e_0} \prod_{i=1}^r (t_ix^{e_i}) \quad \text{and} \quad \wv = x^{f_0} \prod_{i=1}^r (t_ix^{f_i}) \] for some $r \geq 0$ and $e_0,f_0, e_1,f_1,\ldots, e_r,f_r \geq 0$.
Let $e = \sum_{i=0}^r e_i$ and $f = \sum_{i=0}^r f_i$.
If either $e \geq 2n$ or~$\wu$ is not cube-free, then it follows from Lemma~\ref{sec: id of Cn pi}(i) that either $f \geq 2n$ or~$\wv$ is not cube-free, whence~$\vC_n$ satisfies the identities $\wu \approx x^3 \prod_{i=1}^r t_i \approx \wv$, contradicting the choice of identities in~$\Sigma$.
Therefore by Lemma~\ref{sec: id of Cn pi}(i),
\begin{enumerate}[(a)]
\item $e,f \leq 2n-1$ and
\item both~$\wu$ and~$\wv$ are cube-free.
\end{enumerate}
Since the identity $\wu \approx \wv$ is nontrivial and is satisfied by the variety $\vC_n\{\idd_n\}$, the words~$\wu$ and~$\wv$ cannot be isoterms for $\vC_n\{\idd_n\}$.
Therefore by~(a) and Lemma~\ref{L: isoterms for Cn}(ii),
\begin{enumerate}[(a)]
\item [(c)] $e = f = 2n-1$.
\end{enumerate}

Suppose that $e_j = 0$ for some~$j$.
Then since $\wu \approx \wv$ is efficient and~(b) holds, $f_j \in \{ 1,2 \}$.
But it is easily seen that the identities $\{ \eqref{id: O}, \idc_n, \idd_n \}$ can only convert~$\wv$ into a cube-free rigid word $x^{d_0} \prod_{i=1}^r (t_ix^{d_i})$ with $\sum_{i=1}^r d_i = 2n-1$ and $d_j \neq 0$; in particular, the identities $\{ \eqref{id: O}, \idc_n, \idd_n \}$ cannot convert~$\wu$ into~$\wv$.
This implies that the variety $\vC_n\{\idd_n\}$ violates the identity $\wu \approx \wv$, which is impossible.
A similar contradiction is deduced if $f_j=0$ for some~$j$.
Therefore by~(b),
\begin{enumerate}[(a)]
\item[(d)] $e_0,f_0, e_1,f_1,\ldots, e_r,f_r \in \{1,2\}$.
\end{enumerate}

Now since the variety $\vC_n\{\idd_n\}$ satisfies the nontrivial identity $\wu \approx \wv$, some identity from $\{ \eqref{id: O}, \idc_n,\idd_n \}$ must be able to convert~$\wu$ into a different word.
By~(c), (d), and Lemma~\ref{L: isoterms for Cn}(i), the rigid word~$\wu$ is an isoterm for the variety $\vC_n$, so none of the identities $\{ \eqref{id: O}, \idc_n \}$ can convert~$\wu$ into a different word.
Therefore only some identities from~$\idd_n$ can be used to convert~$\wu$ into a different word; in this case, in view of~(c) and~(d), it is easily seen that $\wu \in \sD_n$.
Then $\wv \in \sD_n$ by Lemma~\ref{sec: id of Cn pi}(ii).
Consequently, $\wu \approx \wv \in \idd_n$.

\subsection*{The mapping $\Lambda$ is an anti-isomorphism}

Let $\pi,\rho \in \lEq(\sD_n)$.
If $\pi \subseteq \rho$, then the inclusion $\vC_n\{\Id(\rho)\} \subseteq \vC_n\{\Id(\pi)\}$ holds, so that $\Lambda(\rho) \subseteq \Lambda(\pi)$.
Conversely, assume the inclusion $\Lambda(\rho) \subseteq \Lambda(\pi)$, so that $\vC_n\{\Id(\rho)\} \subseteq \vC_n\{\Id(\pi)\}$.
Then for any $(\wu,\wv) \in \pi$, the identity $\wu \approx \wv$ is satisfied by $\vC_n\{\Id(\rho)\}$, whence $(\wu,\wv) \in \rho$ by Lemma~\ref{sec: id of Cn pi}(ii).
Therefore $\pi \subseteq \rho$.

\section{Finitely generated varieties} \label{sec: FG}

Recall from Subsection~\ref{subsec: main results} that~$\vE_m$ is the subvariety of~$\vO$ defined by the identities \[ x^{m+1} \approx x^m, \quad x^mt \approx tx^m; \tag{$\ide_m$} \] in other words, $\vE_m = \vO\{\ide_m\}$.
The variety $\vE_2$ is not {\fu} because the lattice $\lL(\vE_2)$ coincides with the chain \[ \mathbf{0} \subset \vE_1 \subset \vM(x) \subset \vM(xt) \subset \vM(xt_1x) \subset \vM(xt_1xt_2x) \subset \cdots \subset \vE_2, \] where~$\mathbf{0}$ is the variety of trivial monoids \cite[Proposition~4.1]{Lee15}.
But by Theorem~\ref{T: periodic}, the variety~$\vE_m$ is {\fu} for all $m \geq 3$.

The following result demonstrates the existence of {\fu} varieties that are finitely generated.

\begin{theorem} \label{T: FG}
For each $m \geq 2$, the variety~$\vE_m$ is contained in some finitely generated variety.
\end{theorem}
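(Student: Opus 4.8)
The goal is to exhibit, for each $m \geq 2$, a finite monoid $\mS$ such that $\vE_m \subseteq \vM(\mS)$. By Lemma~\ref{L: isoterm}, the inclusion $\vM(\ww) \subseteq \vM(\mS)$ holds precisely when $\ww$ is an isoterm for $\vM(\mS)$, so the strategy is to produce a finite monoid for which \emph{every} word that is an isoterm for $\vE_m$ remains an isoterm. Since $\vE_m = \vO\{\ide_m\}$ satisfies $x^{m+1} \approx x^m$ and $x^m t \approx t x^m$, together with the identities~\eqref{id: O}, any word can be brought (modulo the identities of $\vE_m$) into a rigid word $x^{e_0}\prod_{i=1}^r(t_i x^{e_i})$ with each $e_i \leq m$; moreover, arguments of the kind used in Section~\ref{sec: overcom} show that the isoterms for $\vE_m$ are essentially the $m$-limited rigid words in a single ``distinguished'' variable (all other variables being linear). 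So it suffices to find a finite monoid $\mS$ for which all such $m$-limited rigid words are isoterms, as then every isoterm for $\vE_m$ is an isoterm for $\vM(\mS)$ and hence $\vE_m \subseteq \vM(\mS)$.

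\textbf{Construction of the witness monoid.} The natural candidate is a factor-monoid-like construction, or a direct product of a few factor monoids, chosen so as to ``see'' all the finitely many relevant rigid shapes at once. Concretely, for fixed $m$ there are only finitely many rigid words of the form $x^{e_0} t_1 x^{e_1} \cdots t_r x^{e_r}$ with $\sum e_i \leq m$ and $r$ bounded (the bound on $r$ coming, as in the proof of Theorem~\ref{T: O}, from efficiency/isoterm considerations). One takes $\mS$ to be the direct product $\prod_{\ww} \mM(\ww)$ over this finite family of words $\ww$, possibly after adjusting each $\mM(\ww)$ to also satisfy $x^{m+1}\approx x^m$ and $x^m t \approx t x^m$ (which the factor monoids of $m$-limited rigid words do automatically, since no factor can contain $x^{m+1}$ or move an $x^m$ block past another variable). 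Each $\mM(\ww)$ lies in $\vO$ by the routine check already invoked in Lemmas~\ref{L: isoterms for An} and~\ref{L: isoterms for Cn}, hence so does the finite product $\mS$; and each target word $\ww$ is an isoterm for its own factor monoid $\mM(\ww)$, hence for $\mS$. This gives $\vE_m \subseteq \vM(\mS)$ with $\mS$ finite.

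\textbf{Verification that $\vE_m$ is captured.} The one genuine piece of work is to confirm that the isoterms for $\vE_m$ really are confined to the finite list of $m$-limited rigid words described above, so that ensuring those are isoterms for $\mS$ suffices. This runs along the lines already rehearsed in Section~\ref{sec: overcom}: given an isoterm $\ww$ for $\vE_m$, apply the identities~\eqref{id: O} and $\ide_m$ to normalize; the identities $x^{m+1}\approx x^m$ force every block of a repeated variable to have exponent at most $m$, while $x^m t \approx t x^m$ together with~\eqref{id: O} force all but one variable to occur linearly and in a rigid pattern — otherwise $\ww$ would be convertible to a distinct word. The efficiency and limitedness bounds then cut $r$ down to a constant depending only on $m$. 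I expect this normalization lemma to be the main obstacle: it is the only step that is not a routine finite check, and care is needed to handle the interaction between the commuting relation $x^m t \approx t x^m$ and the three identities comprising~\eqref{id: O}. Everything after that — that each $\mM(\ww)$ lies in $\vO$, that the finite product is finite, and that $\ww$ is an isoterm for $\mM(\ww)$ — is immediate from the machinery already developed, and the conclusion that there exist finitely generated {\fu} varieties of monoids follows by combining Theorem~\ref{T: periodic} (for $m \geq 3$) with the inclusion just constructed.
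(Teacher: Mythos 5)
There is a genuine gap, and it sits exactly where the real difficulty of the theorem lies. Your argument hinges on the claim that the isoterms for $\vE_m$ reduce, up to the normalization, to a \emph{finite} family of rigid words (the $m$-limited ones, with $r$ bounded ``by efficiency/isoterm considerations as in Theorem~\ref{T: O}''), so that a direct product of the finitely many corresponding factor monoids can serve as the witness. This is false. The words $\ww_{m,r} = x^{m-1}t_1x^{m-1}t_2x^{m-1}\cdots t_rx^{m-1}$ are isoterms for $\vE_m$ for \emph{every} $r \geq 1$ --- indeed the paper's first step is to prove $\vE_m = \bigvee_{r\geq1}\vM(\ww_{m,r})$ (Lemma~\ref{L: Em generators}) --- and their exponent sum $(r+1)(m-1)$ is unbounded, so they are $m$-free but not $m$-limited once $r \geq 2$. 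The bound $r\leq 3$ in the proof of Theorem~\ref{T: O} constrains the efficient rigid \emph{identities} defining subvarieties, not the length of isoterms, so it gives you nothing here. Concretely, take $m=2$: every factor monoid $\mM(\wu)$ of a $2$-limited rigid word $\wu$ satisfies the identity $xt_1xt_2xt_3x \approx x^4t_1t_2t_3$ (if $x \mapsto a \neq 1$, each side forces some letter to occur at least four times in a factor of $\wu$, which is impossible, so both sides are $0$; if $a=1$ both sides equal $b_1b_2b_3$). Hence your product monoid $\mS$ satisfies this identity, while $\vE_2$ violates it because $\ww_{2,3}=xt_1xt_2xt_3x$ is an isoterm for $\vE_2$; therefore $\vE_2 \not\subseteq \vM(\mS)$ and the construction fails.

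The missing idea is a single finite monoid for which the infinitely many words $\ww_{m,r}$, $r\geq1$, are \emph{simultaneously} isoterms, and no finite product of factor monoids of short rigid words achieves this. The paper obtains such a witness from Trahtman's $m$-testable semigroup $\mT_m$: the identities of $\vT_m^1$ are governed by prefixes, suffixes, and the \emph{set} of length-$m$ factors, invariants that are insensitive to the number $r$ of blocks, and one checks that any word sharing these invariants with $\ww_{m,r}$ (and of the rigid shape forced by $xy$ being an isoterm) must equal $\ww_{m,r}$ (Proposition~\ref{P: Tn}). Your first step --- decomposing $\vE_m$ as a join of $\vM(\ww)$ over suitable isoterms and then verifying those isoterms survive in the candidate variety --- is essentially the paper's Lemma~\ref{L: Em generators} and is the right strategy; but the second step cannot be completed with the finite family and factor-monoid product you propose.
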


In Subsection~\ref{subsec: decomposition}, the variety~$\vE_m$ is decomposed into the complete join of some of its subvarieties.
Then in Subsection~\ref{subsec: containing Em}, all these subvarieties of~$\vE_m$ are shown to be contained in a certain finitely generated variety~$\vT_m^1$.
Consequently, $\vE_m$ is a subvariety of~$\vT_m^1$, whence Theorem~\ref{T: FG} is established.

\subsection{A join decomposition of~$\vE_m$} \label{subsec: decomposition}

Recall from Subsection~\ref{subsec: rigid} that a rigid word \[ x^{e_0} \prod_{i=1}^r (t_ix^{e_i}) = x^{e_0} t_1 x^{e_1} t_2 x^{e_2} \cdots t_r x^{e_r} \] is \textit{cube-free} if $e_0,e_1,\ldots,e_r < 3$.
More generally, for $m \geq 2$, this rigid word is \textit{$m$-free} if $e_0,e_1,\ldots,e_r < m$.
For each $m \geq 2$ and $r \geq 1$, define the $m$-free rigid word \[ \ww_{m,r} = x^{m-1} \prod_{i=1}^r (t_ix^{m-1}) = x^{m-1} t_1 x^{m-1} t_2 x^{m-1} \cdots t_r x^{m-1}. \]

\begin{lemma} \label{L: Em generators}
$\vE_m = \bigvee_{r\geq1} \vM(\ww_{m,r})$ for all $m \geq 2$.
\end{lemma}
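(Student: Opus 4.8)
The plan is to prove the two inclusions $\bigvee_{r\geq1}\vM(\ww_{m,r})\subseteq\vE_m$ and $\vE_m\subseteq\bigvee_{r\geq1}\vM(\ww_{m,r})$ separately. For the forward inclusion, it suffices to show $\vM(\ww_{m,r})\subseteq\vE_m$ for every $r\geq1$, and by Lemma~\ref{L: isoterm} this amounts to checking that $\ww_{m,r}$ is an isoterm for the defining identities of~$\vE_m$, namely~\eqref{id: O} together with $\{x^{m+1}\approx x^m,\ x^mt\approx tx^m\}$. Since $\ww_{m,r}$ is $m$-free, all its blocks of the variable~$x$ have length exactly $m-1<m$, so neither $x^{m+1}\approx x^m$ nor $x^mt\approx tx^m$ can be applied to it; and since in $\ww_{m,r}$ every pair of the variables $t_1,\dots,t_r$ occurs in a fixed order and each $t_i$ occurs once, the identities~\eqref{id: O}, which only permute variables that occur at least twice or swap adjacent linear variables separated appropriately, cannot alter $\ww_{m,r}$ either. (This is the routine ``isoterm check'' of the same flavour as Lemmas~\ref{L: isoterms for An} and~\ref{L: isoterms for Cn}; I would state it as a short verification rather than grind it out.) Hence each $\vM(\ww_{m,r})\subseteq\vE_m$, and therefore $\bigvee_{r\geq1}\vM(\ww_{m,r})\subseteq\vE_m$.

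For the reverse inclusion, I would argue contrapositively at the level of identities: letting $\vV=\bigvee_{r\geq1}\vM(\ww_{m,r})$, it is enough to show that every identity $\wu\approx\wv$ satisfied by~$\vV$ is also satisfied by~$\vE_m$. So suppose $\vV$ satisfies $\wu\approx\wv$. First, $\vV\subseteq\vE_m$ has already been established, so $\vV$ is aperiodic and $m$-limited-behaved; using the fact (Lemma~\ref{L: O subvarieties} applied inside~$\vE_m$, or a direct normal-form argument) that modulo the identities $\{\eqref{id: O},\ide_m\}$ every word is equivalent to a rigid word, I may assume $\wu$ and $\wv$ are rigid words. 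Within~$\vE_m$ one may further reduce every exponent to lie in $\{0,1,\dots,m\}$ and collapse a block of length~$\geq m$ that is ``central''; the key point is that to decide whether $\vE_m\models\wu\approx\wv$ for rigid $\wu,\wv$ it suffices to compare their images after normalising, and this normalisation is witnessed by the single identity pattern carried by the words $\ww_{m,r}$.

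The heart of the argument — and the step I expect to be the main obstacle — is the \emph{separation claim}: if $\wu\approx\wv$ is a nontrivial rigid identity that \emph{fails} in~$\vE_m$, then some factor monoid $\mM(\ww_{m,r})$ also fails it, so $\vV$ fails it. Concretely, given such $\wu,\wv$ in $\vE_m$-normal form, one chooses $r$ large enough (roughly $r$ equal to the number of non-$x$ variables occurring, plus a bounded constant) and exhibits a substitution $\varphi\colon\sX\to\sX^*$ into the alphabet of $\ww_{m,r}$ sending $x\mapsto x$ and the linear variables of $\wu$ to an order-respecting selection among $t_1,\dots,t_r$, arranged so that $\varphi(\wu)$ is a factor of $\ww_{m,r}$ while $\varphi(\wv)$ is not (or maps to a genuinely different factor). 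Distinguishing the cases according to \emph{why} $\wu\approx\wv$ fails in~$\vE_m$ — a discrepancy in some exponent below~$m$, or a reordering of two doubled occurrences of a variable not permitted by~\eqref{id: O}, or a reordering involving the linear $t_i$'s — will require a short case analysis, but in each case the witnessing word is one of the $\ww_{m,r}$, precisely because these words are ``$m$-free and maximally separated'' and hence are isoterms for $\vE_m$ exactly up to the moves that $\vE_m$ genuinely permits. Once the separation claim is in hand, we conclude $\vE_m\subseteq\vV$, and combining with the forward inclusion gives $\vE_m=\bigvee_{r\geq1}\vM(\ww_{m,r})$.
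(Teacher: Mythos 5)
Your overall strategy is the paper's: prove $\vM(\ww_{m,r})\subseteq\vE_m$ for each $r$ to get one inclusion, then use Lemma~\ref{L: O subvarieties} to reduce the reverse inclusion to rigid identities and show that every rigid identity holding in the join $\vJ_m=\bigvee_{r\geq1}\vM(\ww_{m,r})$ already holds in $\vE_m$. (The paper phrases this as a contradiction: if $\vJ_m\neq\vE_m$, then $\vJ_m$ satisfies a rigid identity violated by $\vE_m$; since every $m$-free rigid word is an isoterm for $\vJ_m$, neither side of that identity can be $m$-free, whence $\ide_m$ derives the identity and $\vE_m$ satisfies it after all.) However, two steps of your write-up need repair. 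First, your justification for restricting to rigid identities is false as stated: it is not true that modulo $\{\eqref{id: O},\ide_m\}$ every word is equivalent to a rigid word (e.g.\ $xyxy$ is not, since these identities cannot reduce the number of occurrences of $y$ below two). The correct reduction applies Lemma~\ref{L: O subvarieties} to the join $\vJ_m$ itself, not ``inside $\vE_m$'': $\vJ_m$ is a noncommutative subvariety of $\vO$ (it contains the noncommutative monoid $\mM(\ww_{m,1})$), so it is \emph{defined} by \eqref{id: O} together with finitely many efficient rigid identities, and since $\vE_m$ satisfies \eqref{id: O}, the inclusion $\vE_m\subseteq\vJ_m$ follows once $\vE_m$ is shown to satisfy each of those rigid identities.

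Second, in your separation claim the substitution sending $x\mapsto x$ and the linear variables to ``an order-respecting selection among $t_1,\dots,t_r$'' does not work as described: if $\wu=x^{e_0}t_1x^{e_1}\cdots t_rx^{e_r}$ is $m$-free with some internal $e_i<m-1$, then its image is not a factor of $\ww_{m,r}$ and evaluates to $0$ together with the image of $\wv$, witnessing nothing. You need padded substitutions $t_i\mapsto x^{a_i}t_ix^{b_i}$, with $b_i+e_i+a_{i+1}=m-1$ for the internal blocks, so that $\varphi(\wu)$ becomes a nonzero factor of $\ww_{m,r}$ while $\varphi(\wv)$, differing from it in some block, is a different element (possibly $0$). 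Note also that in a rigid identity the variables $t_1,\dots,t_r$ occur in the same order on both sides, so your cases involving ``reorderings'' do not arise: since any two non-$m$-free rigid words with the same $t$-sequence are equated by $\ide_m$, a nontrivial rigid identity fails in $\vE_m$ exactly when at least one side is $m$-free, and that is the only case your substitution must handle. With these corrections the argument closes and coincides with the paper's, which leaves the isoterm verification as routine.
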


\begin{proof}
Let~$\vJ_m$ denote the complete join $\bigvee_{r\geq1} \vM(\ww_{m,r})$.
Then by Lemma~\ref{L: isoterm}, the $m$-free rigid words $\ww_{m,1}, \ww_{m,2}, \ww_{m,3},\ldots$ are isoterms for~$\vJ_m$.
In fact, it is routinely shown that every $m$-free rigid word is an isoterm for~$\vJ_m$.

It is easy to check that $\mM(\ww_{m,r}) \in \vE_m$ for all $r \geq 1$, thus $\vJ_m \subseteq \vE_m$.
Seeking a contradiction, suppose that $\vJ_m \neq \vE_m$.
The variety~$\vJ_m$ is non\-com\-mu\-ta\-tive because it contains the noncommutative monoid $\mM(\ww_{m,1})$.
Therefore it follows from Lemma~\ref{L: O subvarieties} that $\vJ_m$ satisfies some rigid identity $\wu \approx \wv$ that is violated by $\vE_m$; in particular, $\wu \neq \wv$.
Since all $m$-free rigid words are isoterms for~$\vJ_m$, by Lemma~\ref{L: isoterm}, the rigid words~$\wu$ and~$\wv$ cannot be $m$-free.
The deduction $\ide_m \vdash \wu \approx \wv$ is then easily established, whence~$\vE_m$ contradictorily satisfies $\wu \approx \wv$.
\end{proof}

\subsection{Finitely generated variety containing $\vE_m$} \label{subsec: containing Em}

For $m \geq 2$, a semigroup is \textit{$m$-testable} if it satisfies any identity $\wu \approx \wv$ such that
\begin{enumerate}[ \ (a)]
\item the prefix of~$\wu$ of length $m-1$ equals the prefix of~$\wv$ of length $m-1$,
\item the suffix of~$\wu$ of length $m-1$ equals the suffix of~$\wv$ of length $m-1$, and
\item the set of factors of~$\wu$ of length~$m$ equals the set of factors of~$\wv$ of length~$m$.
\end{enumerate}
The class of $m$-testable semigroups forms a variety that is generated by some finite semigroup~$\mT_m$, and~$\mT_m$ satisfies an identity $\wu \approx \wv$ if and only if the conditions in (a)--(c) hold; see Trahtman~\cite{Tra99} for more information.

Let~$\vT_m^1$ denote the variety of monoids generated by~$\mT_m^1$.
Since the semigroup~$\mT_m$ satisfies the identity $x^{m+1} \approx x^m$ \cite{Tra99}, the variety~$\vT_m^1$ is aperiodic.

\begin{proposition} \label{P: Tn}
$\vE_m \subseteq \vT_m^1$ for all $m \geq 2$.
\end{proposition}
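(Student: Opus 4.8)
The plan is to prove $\vE_m \subseteq \vT_m^1$ by combining the join decomposition $\vE_m = \bigvee_{r\geq1} \vM(\ww_{m,r})$ from Lemma~\ref{L: Em generators} with a direct verification that each generator $\mM(\ww_{m,r})$ lies in $\vT_m^1$. Since $\vT_m^1$ is a variety and hence closed under complete joins, it suffices to show $\vM(\ww_{m,r}) \subseteq \vT_m^1$ for every $r \geq 1$; equivalently, by Lemma~\ref{L: isoterm}, it suffices to show that every $m$-free rigid word $\ww_{m,r}$ is an isoterm for $\vT_m^1$.

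To do this, I would use the explicit description of the identities satisfied by $\mT_m^1$. Since $\mT_m$ satisfies an identity $\wu \approx \wv$ precisely when the length-$(m-1)$ prefixes agree, the length-$(m-1)$ suffixes agree, and the sets of length-$m$ factors agree, the monoid $\mT_m^1$ satisfies $\wu \approx \wv$ exactly when these same three conditions hold after possibly deleting variables (the standard effect of adjoining an identity element: an identity holds in $\mS^1$ iff every identity obtained from it by substituting $1$ for some subset of variables holds in $\mS$). So I would first record the precise characterization of identities of $\mT_m^1$, then check that if $\ww_{m,r} \approx \wv$ is an identity satisfied by $\mT_m^1$ with $\wv \neq \ww_{m,r}$, a contradiction arises. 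The key observation is that $\ww_{m,r} = x^{m-1} t_1 x^{m-1} \cdots t_r x^{m-1}$ is determined up to the identity by its length-$m$ factors together with its prefix and suffix of length $m-1$: the length-$m$ factors of the form $x^{m-1}t_i$ and $t_ix^{m-1}$ pin down the cyclic position of each $t_i$ and the block $x^{m-1}$ on each side, the prefix $x^{m-1}$ forces the word to begin with $x^{m-1}$, and the suffix forces it to end with $x^{m-1}$; since no variable can be deleted without destroying one of these factors (each $t_i$ appears in the factor $x^{m-1}t_i$ and each copy of $x$ is needed to realize the length-$m$ power factor $x^m$'s absence, i.e.\ the word is genuinely $m$-free so $x^m$ is not a factor), any $\wv$ meeting the three conditions and obtainable by deletion must equal $\ww_{m,r}$.

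The main obstacle I anticipate is the careful combinatorial bookkeeping in this last step: one must rule out \emph{all} words $\wv$ — including those over a possibly larger alphabet, with $x$ occurring in larger powers, or with the $t_i$ in a different order or with repetitions — that happen to share the length-$m$ factor set, the prefix, and the suffix with $\ww_{m,r}$, and one must simultaneously handle the deletion of variables. The cube-free (more generally, $m$-free) structure of $\ww_{m,r}$ is what makes this work: because $x^m$ is not a factor of $\ww_{m,r}$, the factor set forces every maximal power of $x$ in $\wv$ to have length exactly $m-1$, and because each $x^{m-1}t_i$ and $t_ix^{m-1}$ is a factor, the variables $t_1,\ldots,t_r$ must appear, each separated from its neighbours by exactly $x^{m-1}$, and in an order consistent with all overlapping length-$m$ windows — which forces the linear order $t_1,\ldots,t_r$. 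I would present this as a short lemma-style argument inside the proof rather than a separate lemma.

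With that isoterm claim in hand, the proof concludes quickly: for each $r \geq 1$, Lemma~\ref{L: isoterm} gives $\vM(\ww_{m,r}) \subseteq \vT_m^1$, hence $\bigvee_{r \geq 1} \vM(\ww_{m,r}) \subseteq \vT_m^1$, and Lemma~\ref{L: Em generators} identifies the left-hand side as $\vE_m$. I would end with a one-line remark that, combined with Lemma~\ref{L: Em generators} and the aperiodicity of $\vT_m^1$, this also reconfirms that $\vE_m$ is aperiodic, though that is not needed for Theorem~\ref{T: FG}.
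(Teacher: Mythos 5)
Your overall architecture matches the paper's: reduce via Lemma~\ref{L: Em generators} and Lemma~\ref{L: isoterm} to showing that each $\ww_{m,r}$ is an isoterm for $\vT_m^1$. But the combinatorial core of your argument rests on a false claim: the length-$(m-1)$ prefix and suffix and the set of length-$m$ factors of $\ww_{m,r}$ do \emph{not} determine the word. No length-$m$ window of $\ww_{m,r}$ contains two distinct letters $t_i$ and $t_j$, since consecutive $t_i$'s are separated by the block $x^{m-1}$ and a window seeing both would need length at least $m+1$; hence the factor set carries no information whatsoever about the relative order of $t_1,\ldots,t_r$. Concretely, the words $x^{m-1}t_1x^{m-1}t_2x^{m-1}$ and $x^{m-1}t_2x^{m-1}t_1x^{m-1}$ have identical length-$(m-1)$ prefixes and suffixes and identical sets of length-$m$ factors, so the semigroup $\mT_m$ satisfies the nontrivial identity $x^{m-1}t_1x^{m-1}t_2x^{m-1} \approx x^{m-1}t_2x^{m-1}t_1x^{m-1}$. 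Your step ``in an order consistent with all overlapping length-$m$ windows --- which forces the linear order $t_1,\ldots,t_r$'' therefore fails.

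The isoterm property is nevertheless true, but the order of the $t_i$'s has to be recovered from the deletion conditions that you mention only in passing: substituting $1$ for $x$ in the identity above yields $t_1t_2 \approx t_2t_1$, which violates condition (c) for $\mT_m$, so $\mT_m^1$ does not satisfy it. The paper packages exactly this step cleanly: it first observes that $xy$ is an isoterm for $\vT_m^1$, so $\vM(xy) \subseteq \vT_m^1$ by Lemma~\ref{L: isoterm}, and then invokes Gusev and Vernikov \cite[Proposition~2.13]{GV18} to conclude that any word $\wv$ with $\vT_m^1$ satisfying $\ww_{m,r} \approx \wv$ must already have the form $x^{e_0}\prod_{i=1}^r(t_ix^{e_i})$ with the $t_i$ occurring once each and in the original order; only after that are the conditions (a)--(c) for $\mT_m$ applied, and they then force $e_0=e_1=\cdots=e_r=m-1$. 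If you wish to avoid that citation, you must explicitly carry out the deletion argument (delete $x$ to pin down the order and multiplicities of the $t_i$'s, and delete subsets of the $t_i$'s to control the exponents of $x$) rather than argue from the factor set of the full words alone.
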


\begin{proof}
In view of Lemma~\ref{L: Em generators}, it suffices to show that $\vM(\ww_{m,r}) \subseteq \vT_m^1$ for all $r \geq 1$.
Suppose that~$\vT_m^1$ satisfies some nontrivial identity $\wu \approx \wv$, where $\wu = \ww_{m,r}$ for some $r \geq 1$.
The word~$xy$ is clearly an isoterm for~$\vT_m^1$, so that $\vM(xy) \subseteq \vT_m^1$ by Lemma~\ref{L: isoterm}.
It is then easily shown that $\wv = x^{e_0} \prod_{i=1}^r (t_ix^{e_i})$ for some $e_0,e_1, \ldots, e_r \geq 0$; see, for example, Gusev and Vernikov \cite[Proposition~2.13]{GV18}.
Since the semigroup~$T_m$ satisfies $\wu \approx \wv$, so that the conditions in (a)--(c) hold for this identity, it follows that $\wv = \ww_{m,r} = \wu$.
Therefore for all $r \geq 1$, the word $\ww_{m,r}$ is an isoterm for~$\vT_m^1$, whence $\vM(\ww_{m,r}) \subseteq \vT_m^1$ by Lemma~\ref{L: isoterm}.
\end{proof}

\section{Some open problems} \label{sec: problems}

\subsection{Varieties of index~2}
Recall that a variety is periodic if it satisfies the identity $x^{m+k} \approx x^m$ for some $m,k \geq 1$; in this case, the number~$m$ is the \textit{index} of the variety.
Varieties of index~1 are completely regular and so are not {\fu}~\cite{PR90}.
For each $m \geq 3$, the {\fu} variety~$\vT_m^1$ is of index~$m$.
As for varieties of index~2, a {\fu} example have not yet been found.

\begin{question}
Is there a {\fu} variety of monoids of index~2?
Specifically, is the variety of monoids defined by the identity $x^3 \approx x^2$ {\fu}?
\end{question}

It is also of interest to locate a {\fu} variety of index~2 that is finitely generated.
An obvious variety for consideration is~$\vT_2^1$, which is known to be generated by the monoid~$\mA_2^1$ obtained from the 0-simple semigroup \[ \mA_2 = \langle a,b \,|\, a^2=aba=a, \, b^2=0, \, bab=b \rangle \] of order five.
Another possible example is the variety~$\vB_2^1$ generated by the monoid $\mB_2^1$, which is well known to be a proper subvariety of~$\vT_2^1$; see, for example, Lee~\cite{Lee12b}.
The plausibility for~$\vB_2^1$---and so also~$\vT_2^1$---to be {\fu} follows from the complex structure of the lattice $\lL(\vB_2^1)$: it is uncountable and has infinite width \cite{JL18,Lee12b}.

\begin{question}
Which, if any, of the varieties~$\vB_2^1$ and~$\vT_2^1$ is {\fu}?
\end{question}

\subsection{Finitely generated finitely based varieties}

A variety is \textit{finitely based} if it can be defined by a finite set of identities; otherwise, it is \textit{non-finitely based}.
A finitely generated variety is \textit{inherently non-finitely based} if every locally finite variety containing it is non-finitely based.

The variety~$\vB_2^1$ is an example of inherently non-finitely based variety~\cite{JL18}.
Since the inclusions $\vB_2^1 \subseteq \vT_2^1 \subseteq \vT_3^1 \subseteq \vT_4^1 \subseteq \cdots$ hold, the varieties in this chain are all inherently non-finitely based.
In other words, all finitely generated {\fu} varieties exhibited in the present article are non-finitely based; finding a finitely based example is thus of fundamental importance.

\begin{question}
Is there a {\fu} variety of monoids that is both finitely generated and finitely based?
\end{question}

\subsection{Joins of varieties}

The join of two finitely generated varieties is clearly finitely generated, but in general, the join of two varieties with some finiteness property can result in a variety that violates the property.
For instance, there exist varieties of monoids~$\vV_1$ and~$\vV_2$ such that
\begin{enumerate}[(1)]
\item[$\bullet$] $\vV_1$ and~$\vV_2$ are finitely based but their join $\vV_1 \vee \vV_2$ is non-finitely based~\cite{Jac05,JS00,Lee13b,Sap15};
\item[$\bullet$] $\vV_1$ and~$\vV_2$ are small but $\lL(\vV_1 \vee \vV_2)$ is an uncountable lattice that violates both the ascending chain and descending chain conditions~\cite{JL18,Gus19}.
\end{enumerate}
Recall that a variety is \textit{small} if it contains only finitely many subvarieties.

It is natural to question if a {\fu} variety of monoids can be the join of two simpler varieties.

\begin{question} \label{Q: join}
\begin{enumerate}[\rm(i)]
\item Are there varieties of monoids~$\vV_1$ and $\vV_2$ that are not {\fu} such that the join $\vV_1 \vee \vV_2$ is {\fu}?
\item Are there small varieties of monoids~$\vV_1$ and~$\vV_2$ such that the join $\vV_1 \vee \vV_2$ is {\fu}?
\end{enumerate}
\end{question}

Question~\ref{Q: join}(i) has an affirmative answer within the context of varieties of semigroups.
For instance, consider the semigroup \[ \mS_0 = \langle a,b \,|\, a^2 = a^3 = ab, \, ba=b \rangle \] of order three and its sub\-semi\-groups $\mS_1 = \{ a,a^2 \}$ and $\mS_2 = \{ a^2,b \}$.
For each $i \in \{ 0,1,2\}$, let~$\svV_i$ denote the variety of semigroups generated by the monoid~$\mS_i^1$.
Then the inclusion $\svV_1 \vee \svV_2 \subseteq \svV_0$ holds.
But the semigroup~$\mS_0$ is embeddable in $\mS_1 \times \mS_2$, so that $\svV_1 \vee \svV_2 = \svV_0$.
Now the variety~$\svV_0$ is {\fu} while the varieties~$\svV_1$ and~$\svV_2$ are not~\cite{Lee07}.

However, the variety~$\svV_1$ is not small, so that~$\svV_1$ and~$\svV_2$ do not provide an affirmative answer to Question~\ref{Q: join}(ii) within the context of varieties of semigroups.

\subsection{{\Lu} varieties}

It follows from Pudl\'ak and \Tuma~\cite{PT80} that a variety~$\vV$ is {\fu} if and only if for all sufficiently large $n \geq 1$, the lattice $\lEq(n)$ of equivalence relations on $\{ 1,2,\ldots,n\}$ is anti-isomorphic to some sublattice of $\lL(\vV)$.
In the present article, {\fu} varieties~$\vV$ of monoids are exhibited with the stronger property that for all sufficiently large $n \geq 1$, the lattice $\lEq(n)$ is anti-isomorphic to some subinterval of $\lL(\vV)$.

A yet even stronger property that a variety~$\vV$ can satisfy is when the lattice $\lEq(\infty)$ of equivalence relations on $\{ 1,2,3,\ldots\}$ is anti-isomorphic to some subinterval of $\lL(\vV)$; following Shevrin et al.\@~\cite{SVV09}, such a variety is said to be \textit{lattice universal}.
Lattice universal varieties of semigroups have been found by Burris and Nelson~\cite{BN71a} and Je\v{z}ek~\cite{Jez76}; it is natural to question if a variety of monoids can also satisfy this property.

\begin{question}
Is there a variety of monoids that is lattice universal?
\end{question}

Since every subvariety of~$\vO$ is finitely based~\cite{Lee12a}, the variety~$\vO$ contains only countably many subvarieties.
Therefore all subvarieties of~$\vO$, which include~$\vE_m$, cannot be lattice universal.

It is also of interest to know if there exists a variety~$\vV$ of monoids with the weaker property that the lattice $\lEq(\infty)$ is anti-isomorphic to some sublattice of $\lL(\vV)$ and not to some subinterval of $\lL(\vV)$.

\end{document}